\title{Veering triangulations and transverse foliations}
\author{Jonathan Zung\footnote{jzung@mit.edu}}
\date{}
\newcommand{\Mpunc}{M^\circ}
\newcommand{\Mbar}{\overline{M^\circ}}
\newcommand{\phipunc}{\phi^\circ}
\newcommand{\phibar}{\overline{\phi^\circ}}
\newcommand{\RR}{\mathcal R}
\newcommand{\R}{\mathbb R}
\newcommand{\lR}{\pi^{-1}(\RR)}
\newcommand{\F}{\mathcal F}
\newcommand{\skel}{\tau^{(2)}}
\newcommand{\ospace}{\mathcal O}
\let\temp\phi
\let\phi\varphi
\let\varphi\temp
\newcommand{\lift}[1]{\widetilde{#1}}
\DeclareMathOperator{\Homeo}{Homeo}
\newtheorem{thm}{Theorem}[section]
\newtheorem{mainthm}{Theorem}
\newtheorem{lem}[thm]{Lemma}
\newtheorem{prop}[thm]{Proposition}
\newtheorem{question}[thm]{Question}
\theoremstyle{definition}
\theoremstyle{remark}
\newtheorem{rmk}[thm]{Remark}
\begin{document}

\maketitle

\begin{abstract}
We present a combinatorial approach to the existence of foliations and contact structures transverse to a given pseudo-Anosov flow. Let $\phi$ be a transitive pseudo-Anosov flow on a closed oriented 3-manifold. Our main technical result is that every codimension 1 foliation transverse to $\phi$ is carried by a single branched surface coming from a veering triangulation. Combined with recent breakthrough work of Massoni, this reduces the existence problem for transverse foliations to something like the feasibility of a system of inequalities (rather than equations!) over $\Homeo_+([0,1])$. As a proof of concept, we show that for the hyperbolic, fibered, non-L-space knot $10_{145}$, the natural pseudo-Anosov flow on the slope $s$ Dehn surgery admits a transverse foliation for $s\in (-\infty, 3)$, but does not admit such a foliation for $s\in [5,\infty)$. The negative result is part of a more general Milnor--Wood type phenomenon which puts limitations on some well known methods for constructing taut foliations on Dehn surgeries.
\end{abstract}

\section{Introduction}
A major goal of the theory of taut foliations is to construct a pseudo-Anosov flow transverse (or almost-transverse) to a given taut foliation on a hyperbolic 3-manifold. When possible, this deepens our understanding of the asymptotic geometry of both the foliation and the flow. In the most harmonious picture, one may intertwine three different actions of $\pi_1(M)$: the action on an ideal boundary of the orbit space of the flow, the action on a universal circle for the foliation, and the action on the sphere at infinity of $\mathbb H^3$. The requisite pseudo-Anosov flow has been constructed in many cases: slithering foliations as shown by Thurston \cite{thurston.ThreemanifoldsFoliationsCircles}, more generally $\R$-covered foliations or even foliations with 1-sided branching due to Fenley \cite{fenley.RegulatingFlowsTopology} and Calegari \cite{calegari.GeometryRcoveredFoliations}, and also finite depth foliations by work of Gabai and Mosher \cite{mosher.LaminationsFlowsTransverse,landry.tsang.EndperiodicMapsSplitting}. Calegari made progress in the general setting, but the problem remains open in general \cite{calegari.ProblemsFoliationsLaminations}.

If one believes in this program, then a route towards classifying taut foliations on a 3-manifold is first to classify transitive pseudo-Anosov flows (which are essentially combinatorial in nature), and then to classify foliations which are transverse to a given flow. We study the second problem in this paper.

\begin{question}\label{ques:th1}
    Given a transitive pseudo-Anosov flow on a closed, oriented 3-manifold, when does there exist a foliation transverse to the flow? How are these various foliations related?
\end{question}
\begin{question}\label{ques:th2}
    Given a transitive pseudo-Anosov flow on a closed, oriented 3-manifold, when does there exist a contact structure transverse to the flow?
\end{question}

Remark that contact structures and foliations transverse to a transitive pseudo-Anosov flow (without 1-prong singularities) are automatically tight or taut. For contact structures this is discussed in \cite[Proposition 5.1]{zung.PseudoAnosovRepresentativesStable}, while for foliations this is a consequence of transitivity of the flow. Thurston challenges his readers with both \cref{ques:th1} and \cref{ques:th2} in his original paper on slitherings, and suggests that there ought to be a connection with the Milnor--Wood inequality and rotation numbers \cite[Section 8]{thurston.ThreemanifoldsFoliationsCircles}.  In this paper, we develop some combinatorial tools to address these questions.

Throughout, $\phi$ is a transitive pseudo-Anosov flow (henceforth possibly with 1-prong singularities) on a closed, oriented 3-manifold $M$. One can always remove finitely many periodic orbits $\gamma_1,\dots, \gamma_n$ so that the restriction of $\phi$ to $\Mpunc:=M\setminus \{\gamma_1,\dots,\gamma_n\}$ satisfies the technical condition ``no perfect fits''. Agol and Gueritaud constructed a canonical \emph{veering} ideal triangulation $\tau$ of $\Mpunc$ which encodes the stable and unstable laminations of $\phi$. The 2-skeleton of $\tau$, denoted by $\skel$, is naturally a branched surface transverse to $\phipunc$. On each cusp of $\Mpunc$, call the meridional slope 0 and the degeneracy slope $\infty$. We say that there is a  \emph{positive solution to the holonomy problem} if $\skel$ carries a lamination having positive boundary slope at every cusp of $\Mpunc$. Similarly, we say that there is a negative solution to the holonomy problem if $\skel$ carries a lamination having negative slope at every cusp. We emphasize that the boundary need not have a linear foliation; it simply needs to have a foliation with positive or negative slope. The branched surface $\skel$ has no triple points, so a lamination carried by $\skel$ is completely specified by a choice of an element of $\Homeo^+([0,1])$ at each edge of $\tau$. This is the sense in which the holonomy problem is a system of inequalities over $\Homeo^+([0,1])$.

\begin{mainthm}\label{thm:converse}
There is a foliation transverse to $\phi$ if and only if there is a positive and a negative solution to the holonomy problem.
\end{mainthm}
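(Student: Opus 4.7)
The equivalence splits into two directions. The reverse direction is the engineering step: given two laminations carried by $\skel$ with the prescribed slope conditions, build a foliation on $M$ that is transverse to $\phi$, using Massoni's construction as a black box. The forward direction is the structural step: given a transverse foliation $\F$, extract two carried laminations on $\skel$ whose boundary slopes have the required signs. In both directions the main technical theorem advertised in the abstract (that every transverse foliation is carried by $\skel$) is what lets us move between the ``analytic'' world of foliations on $M$ and the combinatorial world of carried laminations on $\skel$.

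\textbf{Forward direction.} I start with a foliation $\F$ transverse to $\phi$ and restrict it to $\Mpunc$ to get a foliation $\F^\circ$ on $\Mpunc$. Applying the main technical result, $\F^\circ$ is carried by $\skel$. Transversality of $\F$ to $\phi$ forces $\F^\circ$ to be transverse to each $\gamma_i$ after isotopy, and hence to meet $\partial N(\gamma_i)$ in a foliation whose slope is neither the meridian ($0$) nor the degeneracy slope ($\infty$). The key claim I would then prove is that $\F^\circ$ naturally produces two carried sublaminations, $\Lambda^+$ and $\Lambda^-$, whose boundary slopes at every cusp lie in $(0,\infty)$ and $(-\infty,0)$ respectively. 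The mechanism is to split the leaves of $\F^\circ$ accumulating on the cusp according to which ``side'' of the degeneracy slope they approach from, using the two-sided co-orientation coming from transversality to $\phi$. I would verify that both halves are still carried by $\skel$ (this is a local check at each edge of $\tau$).

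\textbf{Reverse direction.} Now I start from a positive solution $\Lambda^+$ and a negative solution $\Lambda^-$. Each complementary region of $\skel$ in $\Mpunc$ has a preferred ``positive'' and ``negative'' side, and the two laminations together record how to thicken $\skel$ transversely on each side. Massoni's result turns this two-sided combinatorial data into a genuine transverse foliation of $\Mpunc$, provided the boundary behavior on each cusp torus is compatible with Dehn filling. The positivity of the slope of $\Lambda^+$ and negativity of $\Lambda^-$ is exactly the condition that neither lamination's boundary foliation is meridional, so each extends over the Dehn filling solid torus to a foliation transverse to the core $\gamma_i$. Assembling these local extensions with Massoni's foliation on $\Mpunc$ produces the desired foliation on $M$; transversality to $\phi$ is preserved because $\skel$ is transverse to $\phi$ and the extensions across the $\gamma_i$ are transverse to the flow by construction.

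\textbf{Main obstacle.} The hard step is the forward direction's split of $\F^\circ$ into sign-separated carried sublaminations: one must show that the decomposition can be done consistently at every cusp simultaneously, and that each piece is still a lamination carried by $\skel$ (as opposed to something degenerate supported on only part of $\skel$). This is delicate because the local dynamics near a removed orbit $\gamma_i$ can interact with the global structure of $\F$. In the reverse direction the main content is pinning down the precise hypotheses of Massoni's theorem so that ``positive and negative solutions'' are indeed the correct input.
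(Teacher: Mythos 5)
Your high-level picture is right — the forward direction hinges on \cref{prop:carried}, and the reverse direction hinges on Massoni's theorem as a black box — but the mechanism you propose in each direction has a genuine gap.

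\textbf{Forward direction.} Your claim that $\F^\circ = \F|_{\Mpunc}$ meets each cusp torus with slope neither $0$ nor $\infty$ is false, and the proposed split of $\F^\circ$ into sign-separated carried sublaminations $\Lambda^\pm$ does not exist. Since $\F$ is a foliation on all of $M$ transverse to the core $\gamma_i$, it extends over the Dehn filling solid torus; this forces $\F^\circ$ to have \emph{meridional} (slope $0$) boundary at every cusp. There is no invariant ``side of the degeneracy slope'' along which leaves accumulate, so there is nothing to split. The paper's route through this obstacle is not a splitting of $\F^\circ$ at all: one first arranges $\F$ isotopic to a foliation $\F'$ transverse to the dual complex $\tau^*$, then applies Eliashberg--Thurston to produce two \emph{contact structures} $\xi_\pm$ that $C^0$-approximate $\F'$. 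The contact condition pushes the boundary slope off zero, and the sign of the slope is pinned down by tightness via a Bennequin-inequality argument applied to curves in $\tau^*$ bounding disks that hit $\gamma_i$. The contact perturbation is indispensable; you cannot produce the positive and negative solutions directly from the foliation.

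\textbf{Reverse direction.} You treat Massoni's theorem as taking two carried laminations and producing a foliation, but its actual hypotheses are a pair of \emph{tight} positive and negative contact structures transverse to a common vector field. Two steps are missing. First, one must convert the positive (resp.\ negative) solution into a positive (resp.\ negative) contact structure transverse to $\phi$: this is \cref{lem:contactexist}, which builds a transverse foliation of $\Mbar$ with nonzero boundary slope, puts it in standard form near the boundary torus via mean curvature flow, tilts it to a confoliation that blows down across the core $\gamma_i$, and perturbs to a contact structure. Second, one must verify that $\xi_\pm$ are tight before Massoni's theorem applies; the paper does this by constructing a weak symplectic filling of $(M,\xi_+)\cup(-M,\xi_-)$ using a closed $2$-form dual to a divergence-free smoothing of the flow. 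Without the contact intermediary and the tightness verification, the reverse direction does not go through.
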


The final ``only if'' statement uses the Eliashberg--Thurston theorem to produce positive and negative contact structures approximating a given foliation. The ``if'' statement uses Massoni's recently announced converse which produces a $C^0$ foliation given a pair of tight contact structures and a common transverse vector field \cite{massoni.TautFoliationsContact}.

Here is the technical input to \cref{thm:converse}:
\begin{prop}\label{prop:carried}
    Let $\F$ be a foliation transverse to $\phi$. Then the restriction of $\F$ to $\Mpunc$ splits open to a lamination fully carried by $\skel$.
\end{prop}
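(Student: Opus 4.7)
The plan is to work in the universal cover, exploit the fact that leaves of a transverse foliation are global sections of the flow, and construct the split-open lamination locally inside each tetrahedron of the veering triangulation.

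First I would lift $\F$ to $\widetilde{\F}$ on $\widetilde{\Mpunc}$. Since $\phi$ is a transitive pseudo-Anosov flow, the orbit space $\ospace$ of $\widetilde{\phi}$ is homeomorphic to $\R^2$, and each leaf of $\widetilde{\F}$ is a global section of $\widetilde{\phi}$; equivalently, it is the graph of a continuous function $\ospace \to \R$ in a flow-adapted product decomposition $\widetilde{\Mpunc} \cong \ospace \times \R$. The faces of the lifted 2-skeleton $\lift{\skel}$ are flow-transverse by construction and, by the Agol--Gu\'eritaud picture, each face projects injectively onto a veering region in $\ospace$; each tetrahedron $\lift Y$ of the lifted triangulation is therefore a flow box with a top and bottom pair of faces in $\lift{\skel}$.

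Next, inside each lifted tetrahedron $\lift Y$ I would perform the local splitting of $\widetilde{\F}$. Each leaf of $\widetilde{\F}$ meets $\lift Y$ in an embedded disk whose boundary lies on the four flow-transverse faces of $\lift Y$, and any two such disks are linearly ordered by flow time. I would define a candidate local lamination by pushing each disk along the flow until it collapses onto either the top or the bottom pair of faces of $\lift Y$, the choice being determined by which is closer in flow time. Gluing these local laminations across shared faces of adjacent tetrahedra — which is possible because the faces are flow-transverse and leaves of $\widetilde{\F}$ traverse them as graphs — yields a closed, $\pi_1(M)$-invariant set $\lift \Lambda$ in a regular neighborhood of $\lift{\skel}$ that descends to the desired $\Lambda \subset \Mpunc$. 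Full carrying holds because each face of $\skel$ is flow-transverse and hence met in its interior by every leaf of $\F$ passing nearby.

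The main obstacle is verifying consistency along edges of $\tau$, where $\skel$ has branching: around each such edge there is a cyclic sequence of tetrahedra, and the local splittings must glue into a single branching pattern on $\skel$. The ``no triple points'' property reduces this to specifying a single element of $\Homeo^+([0,1])$ at each edge, encoding how leaves of $\F$ distribute among the two sheets present at each moment along the edge. Showing that this data is well defined, continuous, and therefore assembles into a bona fide lamination (and not merely a branched surface with residual singularities) is the heart of the argument; it is where the ``no perfect fits'' condition on $\phipunc$ — which excludes asymptotic coincidences between flow lines and the boundaries of veering regions — is expected to play the essential role, together with the fact that leaves of $\widetilde{\F}$ vary continuously as graphs over $\ospace$.
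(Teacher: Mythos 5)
Your opening claim that each leaf of $\widetilde{\F}$ is a global section of $\widetilde\phi$ (equivalently, the graph of a function $\ospace\to\R$) is false in general, and this error undermines the rest of the plan. A foliation transverse to a pseudo-Anosov flow typically has nontrivial branching in its leaf space; a lifted leaf $\widehat\lambda$ projects to a proper subset of $\ospace$, and over a flow cylinder $\pi^{-1}(r)\cong r\times\R$ the leaf is the graph of a function $h$ whose domain $\pi(\widehat\lambda)\cap r$ may be a proper subrectangle $I\times[0,1]$ or $[0,1]\times I$, with $h$ blowing up at the boundary. This is precisely the content of the paper's Lemma~\ref{lem:flowbox_structure}; the proof only establishes that \emph{some} leaf covers all of $r$, and that the two failure modes (domain missing a horizontal strip vs.\ a vertical strip) are separated into ``below'' and ``above'' regimes. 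Treating leaves as global sections trivializes away the actual difficulty.

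You do flag the real obstacle --- coherently assembling the per-tetrahedron splittings across edges --- but you offer no mechanism for resolving it, only the hope that ``no perfect fits'' and continuity will suffice. The paper resolves it with a genuinely new device: an \emph{$\F$-admissible lift} $f:\RR\to\lR$, an equivariant, order-preserving assignment to each maximal rectangle $r\subset\ospace$ of a rectangle $f(r)$ lying in a leaf of $\widehat\F$ over $r$ (Lemma~\ref{lem:a}). Order preservation is with respect to the taller-narrower partial order $<_{\mathcal R}$ on maximal rectangles and the transversal order $<_\F$ on leaves, and it is exactly the consistency condition you need. Proving such a lift exists is the heart of the matter; the paper does it orbit-by-orbit, and the crucial ingredient ruling out bad configurations (translates of a rectangle sinking arbitrarily far backward in flow time) is not ``no perfect fits'' per se but Gromov hyperbolicity of weak stable leaves, used in Lemma~\ref{lem:nottoolow} to bound how far such translates can descend. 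Your ``push to whichever of top/bottom is closer in flow time'' is a local rule and is not even well defined at edges where sheets branch; the admissible lift supplies the global choice of incision sites (the rectangles $f(\pi(T_r))$) that makes the splitting in Lemma~\ref{lem:b} unambiguous. Without some replacement for this construction, the proposal has a genuine gap.

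Finally, a minor point: ``full carrying'' does not follow merely because faces are flow-transverse and leaves pass nearby. In the paper it follows from surjectivity of the explicit collapsing map $g$ built from the admissible lift, which is precisely what guarantees every sector of $\skel$ receives a leaf after the isotopy.
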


This result follows the spirit of a line of work by Landry, Minksy, and Taylor. They previously proved that transverse \emph{surfaces} are carried by $\skel$ \cite{landry.VeeringTriangulationsThurston, landry.minsky.ea.TransverseSurfacesPseudoAnosov}. Their proof uses a somewhat different strategy based on the notion of a taut surface and its Euler characteristic. Their strategy might work for foliations with an invariant transverse measure of full support, but does not seem to apply to our situation in generality.

\subsection{Obstructing transverse taut foliations}
The L-space conjecture suggests that for a non-L-space knot in $S^3$, every nontrivial Dehn surgery admits a taut foliation. The most successful systematic techniques for constructing such taut foliations in fact produce foliations transverse to a natural pseudo-Anosov flow. Roberts showed that for fibered knots in $S^3$, surgery slopes in the interval $(-1,1)$ admit taut foliations \cite{roberts.TautFoliationsPunctured}. Roberts and Li adapted this construction to non-fibered knots \cite{li.roberts.TautFoliationsKnot}. Krishna pushed these constructions to a much larger interval of slopes in the case of positive braid knots, and Zhao did the same for a special class of fibered knots \cite{krishna.TautFoliationsBraid,krishna.TautFoliationsPositive,zhao.CoorientableTautFoliations}. All of these techniques proceed by constructing a branched surface and analyzing the carried laminations. The branched surface in question is always transverse (or almost-transverse) to a pseudo-Anosov flow in the knot complement, and the resulting foliation is transverse (or almost-transverse) to Fried surgery on the pseudo-Anosov flow. Roberts showed that in some cases (eg fibered hyperbolic knots with fractional Dehn twist coefficient 0), every surgery slope in $(-\infty,\infty)$ results in a pseudo-Anosov flow with a transverse foliation.  However, experts suspect that these techniques will not produce taut foliations on all non-L-space surgeries. We give definite proof of these limitations:

\begin{mainthm}\label{thm:nofol1}
    Let $K$ be the hyperbolic, fibered, genus 2, non-L-space knot $10_{145}$. For $s\in (-\infty,3)$, slope $s$ surgery on $K$ admits a taut foliation transverse to the natural pseudo-Anosov flow on $S^3_s(K)$. For $s\in [5,\infty)$, there does not exist a foliation transverse to this flow.
\end{mainthm}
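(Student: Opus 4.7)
The plan is to apply \cref{thm:converse} to the natural pseudo-Anosov flow $\phi_s$ on $S^3_s(K)$, which is the Fried surgery extension of the monodromy suspension flow on $S^3\setminus K$. For generic $s$ the core of the surgery is a regular periodic orbit of $\phi_s$; drilling it recovers $S^3\setminus K$ with its fibered flow, whose veering triangulation $\tau$ is therefore $s$-independent and can be computed directly from the monodromy of $10_{145}$ (possibly after drilling a few additional periodic orbits to remove perfect fits). By \cref{prop:carried}, a foliation transverse to $\phi_s$ restricts to a lamination on $\Mpunc_s$ fully carried by its 2-skeleton; pulling back through the Dehn filling at the core cusp, this corresponds to a lamination on the corresponding open piece of $S^3\setminus K$ fully carried by $\skel$ whose boundary slope at the $K$-cusp avoids both the surgery slope $s$ and the degeneracy slope $\delta$ (the latter determined by the fractional Dehn twist coefficient of $10_{145}$). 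Writing $A\subseteq \R\cup\{\infty\}$ for the set of $K$-cusp slopes attained by laminations carried by $\skel$, \cref{thm:converse} thus reduces the theorem to showing that $A$ meets both open arcs of $(\R\cup\{\infty\})\setminus\{s,\delta\}$ for $s<3$, and misses one of them for $s\geq 5$.

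For the existence direction, I would construct, for each $s<3$, two laminations carried by $\skel$ whose $K$-cusp slopes lie on opposite sides of $s$ in the slope circle and both avoid $\delta$. Natural candidates come from the monodromy itself: its stable and unstable laminations, split along the branch locus of $\skel$ as in \cref{prop:carried}, produce laminations whose $K$-cusp slopes can be read off the triangulation. More broadly, the transverse taut foliations on $S^3_s(K)$ constructed by Roberts, Krishna, and Zhao split open via \cref{prop:carried} to further carried laminations covering explicit slope ranges; verifying that these collectively cover $(-\infty,3)$ with the required sign on each side of $s$ becomes a finite combinatorial check on the small veering triangulation of $10_{145}$.

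The hard part, and the main obstacle, is the non-existence direction $s\geq 5$. The claim I need is a Milnor--Wood type bound showing that $A$ lies in a subset bounded away from $+\infty$, so that for $s\geq 5$ one of the two arcs at the $K$-cusp contains no slope of any carried lamination. My plan is to derive such a bound directly from the combinatorics of the cusp triangulation. After computing $\tau$ explicitly, I would enumerate the strip of triangles meeting the $K$-cusp together with their veering colors, and show that the branching of $\skel$ along each such triangle constrains the $\Homeo^+([0,1])$ holonomy data on the crossing edges to shear in only a bounded direction; summing these constraints around one meridional loop yields an upper bound on the total rotation, and hence on the slopes in $A$. The crux is converting this qualitative monotonicity into the sharp numerical threshold that places the critical slope in the gap $(3,5)$, rigorously tracking the net rotation despite the noncommutativity and flexibility of the edge-wise homeomorphism data --- precisely the Milnor--Wood phenomenon promised in the introduction, which should also constrain the general branched-surface methods used in existing foliation constructions.
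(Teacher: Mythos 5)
Your high-level reduction is exactly the paper's: pass to the veering triangulation of $S^3\setminus K$, invoke \cref{prop:carried} and \cref{thm:converse}, translate the question into which boundary slopes are realized by laminations carried by $\skel$, and then find/obstruct slopes. However, there are concrete problems with both halves of the execution.

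For the existence direction, your leading candidate — the stable and unstable laminations of the monodromy — does not work. Suspended, those give the weak stable/unstable foliations of the flow, which are \emph{tangent} to $\phi$ rather than transverse, and which in any case have boundary slope equal to the degeneracy slope; they cannot be carried by the transverse branched surface $\skel$ and contribute nothing to bounding the interval of achievable slopes. Your fallback (split open Roberts/Krishna/Zhao foliations) would work in principle but is indirect. The paper instead writes down two explicit candidate solutions by hand on the small train track in \cref{fig:10_145}: one whose boundary is the meridian (slope $0$ in the picture coordinates) and one of slope $\leq -1/3$ (\cref{lem:slopes1}, \cref{fig:10_145_slopes}, including the verification that the curve through edge $A$ has a consistent $\Homeo^+([0,1])$ assignment). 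Crucially, just these two extreme solutions suffice for the whole one-parameter family of surgeries: for every $s$ strictly between them there is automatically a solution on each side. You do not need a fresh pair of laminations for each $s<3$.

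For the non-existence direction, your Milnor--Wood heuristic points in the right direction, but you have not identified the actual combinatorial obstruction, and you are missing half the argument. The paper splits the excluded slope range into two pieces. For standard slopes $>6$ (picture slopes $>0$), it uses the indirect observation (\cref{lem:slopes2}) that a positive solution would produce a taut foliation on $S^3$ — contradiction; no enumeration of triangles is needed. For standard slopes in $[5,6)$ (picture slopes $\leq -1$), the combinatorial obstruction (\cref{lem:slopes3} and its generalization) is this: an integral curve descending a ladderpole would have to pass through \emph{both} endpoints of a single edge of $\tau$, and no orientation-preserving homeomorphism $[0,n_e]\to[0,m_e]$ can send a point of the ``upper'' fan below a point of the ``lower'' fan and vice versa, so the descent is blocked after one edge. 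This is a local, discrete monotonicity constraint at a single vertex of the boundary train track, not an accumulated-rotation estimate ``summed around one meridional loop''; phrasing it as a total-rotation bound in $\Homeo^+([0,1])$ without this specific observation would leave the sharp numerical threshold ungrounded. Finally, the translation between picture coordinates and standard surgery coordinates (via $\lambda=(6,-1)$, $\mu=(-1,0)$, an orientation-reversal, and a Möbius change of slope) is where $(-1/3,0)$ becomes $(-\infty,3)$ and where $5$ appears; your proposal gestures at the fractional Dehn twist coefficient but does not carry out this conversion, which is where the numbers $3$ and $5$ actually come from.
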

Note that this is not a potential counterexample to the L-space conjecture, as Dunfield has found taut foliations on all non-trivial surgeries on prime non-L-space knots up to 16 crossings \cite[Theorem 8.3]{dunfield.FloerHomologyGroup}. The bound $5$ is not tight, and could likely be improved with more careful analysis. One may be able to prove similar non-existence results using Heegaard Floer homology, since the existence of a transverse foliation implies the nontriviality of $HF_{red}$ in the grading corresponding the orthogonal 2-plane field to the flow. In forthcoming work, Baldwin--Hedden--Krishna provide a Heegaard Floer homology obstruction to the existence of transverse taut foliations in certain Dehn surgeries along fibered strongly quasipositive knots (including $10_{145}$) \cite{krishna.baldwin.hedden}. Note that we have not ruled out any slope $s < 2g(K)-1$, and we plan to investigate this problem in the future.

\Cref{thm:nofol1} is a specific application of a more general criterion:
\begin{mainthm}\label{thm:nofol2}
    Suppose $\phi$ is a pseudo-Anosov flow without perfect fits having a single singular $k$-prong orbit such that the first return map rotates by $\ell$ prongs and $gcd(k,\ell)=1$. Then there is an open interval of slopes containing the degeneracy slope such that Fried surgery along any slope in this interval not equal to the degeneracy slope results in a pseudo-Anosov flow with no transverse foliation. 
\end{mainthm}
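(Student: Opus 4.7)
The plan is to apply Theorem~\ref{thm:converse} to each Fried surgered flow and reformulate the problem as a bound on the boundary slopes realized by laminations carried by $\skel$. Since Fried surgery at slope $s$ does not alter the flow $\phipunc$ on $\Mpunc = M \setminus \gamma$, the veering triangulation $\tau$ and its $2$-skeleton $\skel$ are unchanged; only the cusp coordinates update, with the new meridian of the surgered core orbit $\gamma_s$ becoming what was slope $s$ in the old coordinates, while the degeneracy slope (old $\infty$) is preserved. For $s$ in a small enough neighborhood of $\infty$ the surgered flow $\phi_s$ has no perfect fits once $\gamma_s$ is punctured, so Theorem~\ref{thm:converse} applies and $\phi_s$ admits a transverse foliation iff $\skel$ carries laminations realizing boundary slopes in each of the two arcs of $\mathbb{RP}^1 \setminus \{s, \infty\}$. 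Letting $S \subset \mathbb{RP}^1$ denote the set of boundary slopes at the cusp realized by laminations carried by $\skel$, the task reduces to showing that $S \cap \R$ is bounded: once $S \cap \R \subset [-N, N]$, any $s$ with $|s| > N$ has one of the two arcs of $\mathbb{RP}^1 \setminus \{s, \infty\}$ disjoint from $S$, blocking the positive or the negative solution.

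To produce the bound $N$, I would analyze the cusp triangulation of $\partial \Mpunc$ induced by $\tau$. For a $k$-prong orbit rotating by $\ell$ prongs per period with $\gcd(k,\ell)=1$, this cusp triangulation has a rigid ``ladder'' structure: $2k$ alternating ascending and descending strips of triangles wrap around the cusp, arranged by $\ell$, with primitivity ensured by the coprimality hypothesis. A lamination carried by $\skel$ restricts to a $1$-dimensional lamination on the cusp torus, carried by the $1$-skeleton of this ladder triangulation viewed as a train track with branching data inherited from $\skel$, and its slope is determined by the $\Homeo^+([0,1])$ splitting data $(f_e)$ on the edges of $\tau$ incident to the cusp. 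Passing to the universal cover $\lift{T} = \R^2$ of the cusp torus and tracking a leaf of the lifted lamination under deck translation by a meridional generator produces a composition of $\Homeo^+([0,1])$ elements whose displacement on $[0,1]$ encodes the boundary slope.

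The main obstacle, and the source of the Milnor--Wood flavor, is converting the ladder combinatorics into a precise bound on this displacement. Although the individual $f_e$ are a priori unconstrained in $\Homeo^+([0,1])$, each ladder crossed by a meridional loop can contribute only a bounded amount of displacement, and the primitivity forced by $\gcd(k,\ell)=1$ controls the number of ladders crossed by any meridional loop. This yields a bound on the accumulated displacement, and hence on the boundary slope, depending only on $k$ and $\ell$. Combined with the first-paragraph reduction, this produces the desired open interval of Fried surgery slopes containing the degeneracy slope on which no transverse foliation exists.
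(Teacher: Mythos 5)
Your first paragraph correctly sets up the reduction, and it matches the paper: Fried surgery leaves $\phipunc$, and hence $\skel$, unchanged, so by Theorem~\ref{thm:converse} it suffices to show that the set of boundary slopes at the cusp realized by laminations carried by $\skel$ is a bounded subset of $\R$ (with the degeneracy slope at $\infty$), since then every surgery slope $s$ of sufficiently large absolute value has one side of $\{s,\infty\}$ disjoint from the realizable slopes, leaving no positive or no negative solution. The gap is in the bounding mechanism itself.

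Your proposed bound rests on the claims that ``each ladder crossed by a meridional loop can contribute only a bounded amount of displacement'' and that coprimality ``controls the number of ladders crossed.'' This is not the actual constraint, and as stated it does not produce a finite bound. Two things are missing. First, what $\gcd(k,\ell)=1$ really gives is that the cusp triangulation has \emph{exactly one} ascending ladderpole and \emph{exactly one} descending ladderpole: the $2k$ prong sheets fall into $\gcd(2k,2\ell)=2\gcd(k,\ell)$ orbits under the first-return rotation, so the ladder count is $2$, not $2k$. Your phrase ``$2k$ strips \dots with primitivity ensured by coprimality'' is vague on exactly the count the argument needs. Second, and more seriously, bounded displacement per ladder only controls descent \emph{across} a ladder (at most $1$ per ladder in the paper's normalization); it says nothing about descent \emph{along} a ladderpole, which is the part that is unbounded in principle. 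The paper bounds ladderpole descent using a structural feature of veering triangulations you never invoke: the two endpoints of any edge $e$ of $\tau$ lie on ladderpoles of the same type. With only one descending ladderpole, both ends of $e$ lie on \emph{that same} ladderpole, so a carried curve that descends two full periods along it must cross both ends of some edge $e$; one then checks (as in the paper's $10_{145}$ example, where the curve would have to cross both $B$ and $B^{-1}$) that this forces the splitting homeomorphism $f_e$ to reverse a pair of points, contradicting $f_e\in\Homeo^+$. This is precisely where coprimality does its work: with more than one descending ladderpole the two ends of $e$ could land on different ladderpoles and no contradiction would arise. Your ``composition of $\Homeo^+([0,1])$ holonomies'' framing is a reasonable intuition for the Milnor--Wood analogy, but it does not, as written, locate this obstruction, and without it the proof does not close.
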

This theorem is widely applicable: it applies to 31,138 of the 87,047 cusped 3-manifolds in the census of veering triangulations up to 16 tetrahedra \cite{parlak.schleimer.ea.Veering03Code}. This includes many examples which are not fibered. The condition on the first return map is necessary in light of Roberts' result on fibered knots with fractional Dehn twist coefficient zero. We view \cref{thm:nofol2} as the simplest instance of a Milnor--Wood type inequality for pseudo-Anosov flows. We will expand upon this analogy in the next subsection.



\begin{subsection}{Analogies with the Seifert fibered case}
When studying homeomorphisms of surfaces, one should start with the periodic homeomorphisms. Similarly, when studying flows on 3-manifolds, one should start with Seifert fibrations. Let us begin by recounting the now-well-understood story of foliations transverse to Seifert fibrations. The first theorem of Brittenham, generalizing a theorem of Thurston, shows that the most interesting taut foliations are transverse to the Seifert fibration:

\begin{thm}[Thurston, Brittenham \cite{brittenham.EssentialLaminationsSeifertfibered}]
Let $E\xrightarrow{f} \Sigma$ be a Seifert fibration over an orbifold $\Sigma$, and let $\F$ be a taut foliation of $E$. Then there is an isotopy of $\F$ so that after the
isotopy, every leaf of $\F$ is either vertical (saturated by Seifert fibers) or horizontal (transverse to Seifert fibers). When $\F$ has vertical leaves, one can decompose $E$ along the (possibly non-compact) vertical leaves to obtain simpler Seifert fibered pieces admitting horizontal foliations.
\end{thm}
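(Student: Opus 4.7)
The plan is to combine Novikov's theorem with a local analysis of how leaves meet Seifert fibers, following the strategy used by Brittenham. Because $\F$ is taut, Novikov's theorem guarantees that $\F$ has no Reeb components, every leaf is $\pi_1$-injective, and every closed transversal is essential in $\pi_1(E)$. These facts control the possible intersection patterns of leaves with fibers and will be invoked repeatedly.

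First I would carry out a local analysis near a Seifert fiber $\gamma$. On a tubular neighborhood $N\cong D^2\times S^1$ of a regular fiber, $\F|_N$ is described by its holonomy around $\gamma$. The central claim is that each leaf meeting $\gamma$ either contains $\gamma$ entirely or is everywhere transverse to $\gamma$: if a leaf $L$ were tangent to $\gamma$ only along a proper closed subset, then near a boundary point of this tangency locus $L$ would peel away from $\gamma$, producing a half-Reeb configuration prohibited by Novikov. Singular Seifert fibers are handled by passing to a finite regular cover which regularizes the fiber, running the argument there, and descending equivariantly.

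Globalizing, let $V\subseteq E$ be the union of Seifert fibers contained in some leaf of $\F$. The local claim shows that $V$ is closed and simultaneously saturated by $\F$ and by the Seifert fibration, so its leaves are exactly the vertical leaves. On the complement $U:=E\setminus V$, no whole fiber lies in a leaf, but leaves may still be tangent to fibers at isolated points. Using a small isotopy along fibers (supported by the transverse vector field supplied by tautness), these isolated tangencies can be pushed off, rendering every leaf of $\F|_U$ transverse to the Seifert fibration, i.e.\ horizontal. Cutting $E$ along the vertical leaves in $V$ then splits it into Seifert fibered pieces with (possibly non-compact) boundary on which the restricted foliation is horizontal, yielding the claimed decomposition.

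The hardest step will be the local dichotomy between ``contains $\gamma$'' and ``transverse to $\gamma$''; this is where Novikov does its real work. One must exhibit, from a hypothetical partial tangency, a closed transversal that is null-homotopic in a leaf, contradicting $\pi_1$-injectivity. Making this equivariant near singular fibers, and checking that the straightening isotopy on $U$ does not propagate uncontrollably as one approaches $V$, are the remaining technicalities; both should be manageable by working in a foliated product chart and appealing once more to the absence of Reeb-like local models.
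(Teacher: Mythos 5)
This theorem is cited in the paper, not proved; the reference is to Brittenham's argument (building on Thurston), which proceeds very differently from what you propose, so your attempt should be measured against that.

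The central step you flag as hardest --- the local dichotomy that a leaf meeting a fiber $\gamma$ either contains $\gamma$ or is transverse to it --- is simply false before any isotopy, and Novikov's theorem does not help. Take the trivial fibration $\Sigma\times S^1$ with the product (horizontal) foliation, and perturb one leaf in a small ball so it dips and rises, creating a nondegenerate tangency with a single fiber at a single point. This is still a taut foliation, the leaf neither contains the fiber nor is transverse to it, and there is no half--Reeb configuration, vanishing cycle, or closed transversal bounding a leafwise disk anywhere in sight. Tangencies of a leaf with a fiber are generically isolated Morse points of the projection $L\to\Sigma$; Novikov constrains the global topology of leaves but says nothing about these local critical points. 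Consequently your set $V$ (fibers contained in a leaf) need not be closed or saturated, and the later steps built on the dichotomy collapse.

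The ``push off isolated tangencies on $U$'' step also conceals a real obstruction: the tangencies of a leaf $L$ with the fibration are the critical points of $f|_L$, and their algebraic count is governed by an Euler-characteristic / Poincar\'e--Hopf constraint, not just by the availability of a transverse vector field. You cannot push them off leaf by leaf by a local isotopy along fibers; one must argue globally that the count is zero and that cancellations can be performed coherently across the foliation. This is where the actual proof does its work. Brittenham's strategy (and the earlier Thurston/Eisenbud--Hirsch--Neumann arguments in the circle bundle case) decomposes $E$ into solid torus neighborhoods $V_i$ of the singular fibers together with a product piece $\Sigma_0\times S^1$, puts the lamination/foliation into a combinatorial normal form with respect to this decomposition by a minimality argument in the spirit of normal surface theory, classifies how leaves can meet each $V_i$ and the vertical annuli $\partial V_i$, and only then straightens to vertical/horizontal in each piece. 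Your soft argument bypasses all of this combinatorics, which is precisely what makes the theorem nontrivial.
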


Transverse foliations are classified by representations of $\pi_1(\Sigma)$ into $\Homeo_+(S^1)$. When $\Sigma$ has no orbifold points, the Milnor--Wood inequality dictates whether a horizontal foliation exists. Let's review how this works. Suppose $\rho:\pi_1(\Sigma) \to \Homeo_+(S^1)$ is a representation, and $\F$ is the corresponding transverse foliation. Choose a basepoint $x\in \Sigma$ and let $\gamma_1,\dots, \gamma_{2g}$ be a collection of disjoint embedded loops based at $x$ whose complement is a disk $D^2$. Cut open the 3-manifold along $f^{-1}(\cup_i \gamma_i)$ to obtain a trivially foliated $D^2 \times S^1$. Then $\F$ induces a foliation on $D^2\times S^1$. Each arc $\gamma_i$ contributes two regions $I\times S^1$ in $\partial D^2 \times S^1$.These two regions have horizontal foliations whose holonomies are $\rho(\gamma_i)$ and $\rho^{-1}(\gamma_i)$ respectively. In order for the foliation to extend across $D^2 \times S^1$, the product of commutators spelled out by $\partial D^2$ must be the identity and have rotation number equal to the Euler class of $E$. The Milnor--Wood inequality is an upper bound on the rotation number of a product of $g$ commutators in $\Homeo_+(S^1)$, or equivalently an upper bound on the Euler class of $E$. To say this in words more similar to \cref{thm:nofol2}, sufficiently small Dehn surgeries along a Seifert fiber result in a Seifert-fibered space without horizontal foliations.

\begin{figure}[h]
    \centering
    \def\svgwidth{4.5in}
    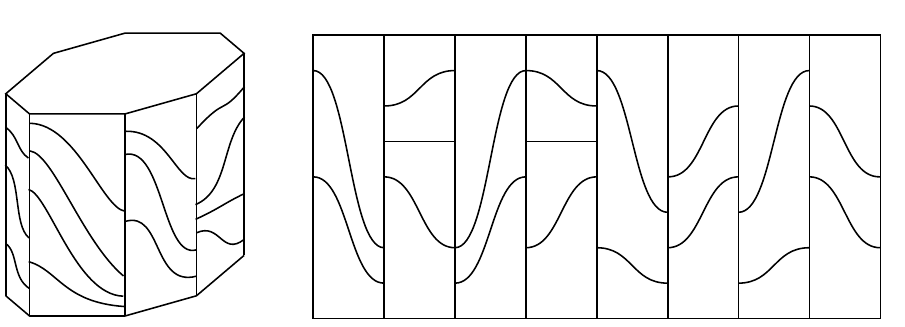
    \caption{$\partial D^2 \times S^1$ shown on the left. On the right, we unfolded $\partial D^2 \times S^1$ to show the induced foliation. The regions corresponding with $\rho(\gamma_1)$ and $\rho(\gamma_1)^{-1}$ are labelled with $A$ and $A^{-1}$.}
    \label{fig:milnorwood}
\end{figure}

Our analysis of foliations transverse to pseudo-Anosov flows follows a similar pattern. Instead of cutting along $f^{-1}(\cup_i \gamma_i)$, we cut along a branched surface which carries the weak stable foliation of the flow. The main technical difficulty lies in guaranteeing that the foliation is transverse to this branched surface. The flow is almost, but not quite tangent to the branched surface. Pictures analogous to the one on the right of \cref{fig:milnorwood} will appear in \cref{sec:examples}.

\end{subsection}

\subsection{Organization}
In \cref{sec:carried} we show that transverse foliations are carried by $\skel$. In \cref{sec:contact} we clarify the relationship between positive solutions to the holonomy problem and transverse contact structures and prove \cref{thm:converse}. In \cref{sec:examples}, we study the holonomy problem for 1-cusped veering triangulations in order to prove \cref{thm:nofol1} and \cref{thm:nofol2}. We conclude with some questions in \cref{sec:questions}
\subsection{Acknowledgements}
I would like to thank Thomas Massoni, Siddhi Krishna, and John Baldwin for explaining their work and many interesting discussions about applications and related questions. I would also like to thank Michael Landry and Sam Taylor for very helpful feedback, and in particular for explaining some of the ideas in \Cref{prop:carried}. Finally, I'm grateful for help from Saul Schleimer with the veering census.

\section{Preliminaries}
Throughout, let $M$ be a closed, oriented 3-manifold and let $\phi$ be a transitive pseudo-Anosov flow on $M$. We allow $\phi$ to have 1-prong singularities. Let $\gamma_1,\dots,\gamma_n$ be a finite collection of closed orbits of $\phi$ such that $\phi$ has no perfect fits relative to $\phi$. This set must contain the 1-prong singularities. Let $\Mpunc=M\setminus \cup_i \gamma_i$ and let $\Mbar$ be the manifold with torus boundary obtained by compactifying $\Mpunc$ with toroidal boundary components. The flow $\phi$ restricts to a flow $\phipunc$ on $\Mpunc$, and extends to a flow $\phibar$ on $\Mbar$ tangent to $\partial \Mbar$. The flow on $\partial \Mbar$ is not a linear flow, but it has a well-defined slope which is called the degeneracy slope.

In the case that $K$ is a knot in $S^3$ and $\phi$ is a pseudo-Anosov flow coming from a sutured hierarchy on $S^3\setminus K$, we can perform Dehn surgery along any slope not equal to the degeneracy slope to get a new pseudo-Anosov flow. We call it the \emph{natural pseudo-Anosov flow}. When $K$ is a fibered hyperbolic knot, the natural pseudo-Anosov flow always refers to the one transverse to the fibers.

The usual notion of orbit space needs some accommodations to deal with 1-prong singularities. For example, there are many 1-prong pseudo-Anosov flows on $S^3$, and we should not define the orbit space as a quotient of $S^3$. We replace $\widetilde M$ with $\widehat M$, the infinite order branched cover of $\lift M$ over the singular orbits in $\lift M$. We use widehats everywhere to denote the lifts of objects to $\widehat M$. Note that $\pi_1(\Mpunc)$ acts on $\widehat M$, and the quotient is $M$. Let $\ospace$ be the quotient of $\widehat M$ by $\widehat \phi$. We use $\pi$ to denote the projection from $\widehat M$ to $\ospace$. The action of $\pi_1(\Mpunc)$ descends to an action on $\ospace$. The weak stable and unstable foliations of $\widehat \phi$ descend to foliations of $\ospace$ with some infinite-prong singularities.

\subsection{Foliations of $T^2$}
A codimension 1 foliation of $T^2$ without Reeb annuli has a well-defined \emph{slope} in $\R \cup \infty$. It may be computed using the asymptotic slope of the lift of any leaf to the universal cover $\R^2$. We say that a folitaion has linear slope $s$ if it is isotopic to a linear foliation of slope $s$.

\subsection{Veering triangulations}
We recall some of the machinery of veering triangulations, but refer to \cite{landry.minsky.ea.FlowsGrowthRates} for a more thorough treatment. The manifold $\Mpunc$ admits a canonical ideal triangulation $\tau$. The tetrahedra are in one-to-one correspondence with maximal rectangles in the orbit space of $\phi$. The $i$-skeleton of $\tau$ is denoted $\tau^{(i)}$. The dual cell complex to $\tau$ is called $\tau^*$. Since $\tau$ has no vertices, $\tau^*$ has no 3-cells.

$\tau$ actually has the structure of a \emph{cooriented taut ideal triangulation}. This means that each face of $\tau$ has a coorientation (morally, the direction of the flow $\phi$ by \cite[Theorem 5.1]{landry.minsky.ea.FlowsGrowthRates}), and each tetrahedron has two faces with outward coorientation and two with inward coorientation. One usually thinks of the dihedral angle at an edge of a tetrahedron as either $0$ or $\pi$ so that the angle between two faces with the same coorientation is $\pi$, and 0 otherwise. This gives $\skel$ the structure of a cooriented branched surface without triple points.

One can check that the sum of the dihedral angles around any edge $e$ is $2\pi$. The two $\pi$ angles split the faces incident to $e$ into two groups called \emph{fans}. We use $n_e$ and $m_e$ to denote the number of triangles in these two fans. It doesn't matter much which side we choose to call $n_e$.

\subsection{The holonomy problem}\label{subsec:holonomyproblem}
 A \emph{candidate solution} for the holonomy problem is a choice of an orientation preserving homeomorphism $f_e: [0, n_e] \to [0,m_e]$ for each edge $e$ of $\tau$. A candidate solution specifes a lamination carried by $\skel$, formed as follows. Replace each face of $\tau$ with a trivially foliated $\Delta \times [0,1]$, where $\Delta$ is a triangle. At each edge $e$, glue the exposed faces of the $n_e$ thickened triangles in one fan of $e$ to the $m_e$ exposed faces in the other fan using $f_e$. See \cref{fig:triangles}. The complement of this lamination has one 3-ball per tetrahedron of $\tau$. These regions can be blown down to obtain a foliation on $\Mpunc$ transverse to $\phipunc$.

\begin{figure}[h]
    \centering
    \def\svgwidth{4in}
    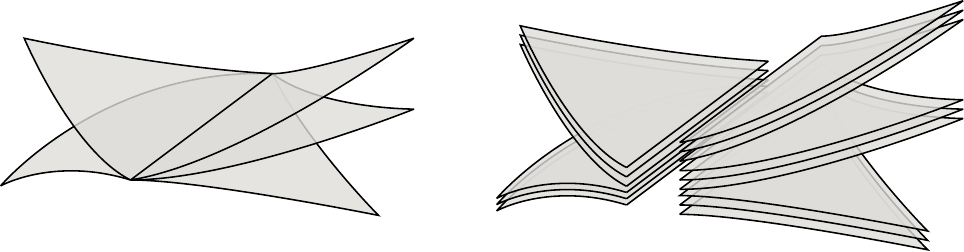
    \caption{An edge $e$ of $\skel$ with $n_e=2$ and $m_e=3$.}
    \label{fig:triangles}
\end{figure}

A candidate solution also gives rise to a foliation on $\Mpunc$ transverse to the dual cell complex $\tau^*$ as follows. If $e^*$ is the dual 2-cell to $e$, then $e^*$ is a polygon with $n_e + m_e$ sides. The candidate solution specifes up to isotopy a foliation on $e^*$ each of whose leaves starts one of the $n_e$ left sides and ends on one of the $m_e$ right sides, connecting a point $x\in [0,n_e]$ with a point $f_e(x)\in [0,m_e]$. See \cref{fig:dualtriangles}. This foliation extends to a codimension 1 foliation on a neighbourhood of the 2-skeleton of $\tau^*$. Note that this foliation is transverse to the 1-skeleton of $\tau^*$.

\begin{figure}[h]
    \centering
    \def\svgwidth{4in}
\begingroup%
  \makeatletter%
  \providecommand\color[2][]{%
    \errmessage{(Inkscape) Color is used for the text in Inkscape, but the package 'color.sty' is not loaded}%
    \renewcommand\color[2][]{}%
  }%
  \providecommand\transparent[1]{%
    \errmessage{(Inkscape) Transparency is used (non-zero) for the text in Inkscape, but the package 'transparent.sty' is not loaded}%
    \renewcommand\transparent[1]{}%
  }%
  \providecommand\rotatebox[2]{#2}%
  \newcommand*\fsize{\dimexpr\f@size pt\relax}%
  \newcommand*\lineheight[1]{\fontsize{\fsize}{#1\fsize}\selectfont}%
  \ifx\svgwidth\undefined%
    \setlength{\unitlength}{387.61307144bp}%
    \ifx\svgscale\undefined%
      \relax%
    \else%
      \setlength{\unitlength}{\unitlength * \real{\svgscale}}%
    \fi%
  \else%
    \setlength{\unitlength}{\svgwidth}%
  \fi%
  \global\let\svgwidth\undefined%
  \global\let\svgscale\undefined%
  \makeatother%
  \begin{picture}(1,0.30628262)%
    \lineheight{1}%
    \setlength\tabcolsep{0pt}%
    \put(0,0){\includegraphics[width=\unitlength,page=1]{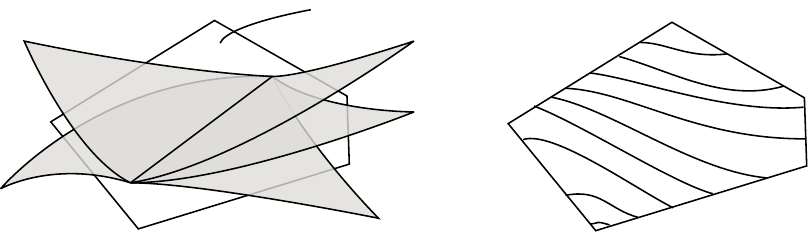}}%
    \put(0.3974516,0.28472134){\color[rgb]{0,0,0}\makebox(0,0)[lt]{\lineheight{1.25}\smash{\begin{tabular}[t]{l}$e^*$\end{tabular}}}}%
    \put(0,0){\includegraphics[width=\unitlength,page=2]{dualtriangles.pdf}}%
    \put(0.38855379,0.00417092){\color[rgb]{0,0,0}\makebox(0,0)[lt]{\lineheight{1.25}\smash{\begin{tabular}[t]{l}$e$\end{tabular}}}}%
    \put(0,0){\includegraphics[width=\unitlength,page=3]{dualtriangles.pdf}}%
  \end{picture}%
\endgroup%

    \caption{The foliation on the dual 2-cell $e^*$.}
    \label{fig:dualtriangles}
\end{figure}

\section{Transverse foliations are carried by $\skel$} \label{sec:carried}
 Let $\F$ be a codimension 1 foliation transverse to $\phi$. A \emph{rectangle} in $\ospace$ is a subspace of $\ospace$ homeomorphic to a product of intervals $I_1 \times I_2$ such that the induced stable and unstable foliations are horizontal and vertical respectively on $I_1 \times I_2$. Here, $I_1$ and $I_2$ are allowed to be open, closed, or half open intervals. A rectangle in $\widehat M$ is a disk in $\widehat M$ positively transverse to $\widehat \phi$ which projects to a rectangle in $\ospace$.

Let $\mathcal R$ be the set of maximal rectangles in $\ospace$. Let $\mathcal R_\F$ be the set of maximal rectangles in $\widehat M$ which are contained in leaves of $\widehat \F$. An \emph{$\F$-admissible lift} of a rectangle $r\subset \ospace$ is a rectangle in $\mathcal R_\F$ which projects via $\pi$ to $R$. 

\begin{figure}[h]
    \centering
    \def\svgwidth{4in}
\begingroup%
  \makeatletter%
  \providecommand\color[2][]{%
    \errmessage{(Inkscape) Color is used for the text in Inkscape, but the package 'color.sty' is not loaded}%
    \renewcommand\color[2][]{}%
  }%
  \providecommand\transparent[1]{%
    \errmessage{(Inkscape) Transparency is used (non-zero) for the text in Inkscape, but the package 'transparent.sty' is not loaded}%
    \renewcommand\transparent[1]{}%
  }%
  \providecommand\rotatebox[2]{#2}%
  \newcommand*\fsize{\dimexpr\f@size pt\relax}%
  \newcommand*\lineheight[1]{\fontsize{\fsize}{#1\fsize}\selectfont}%
  \ifx\svgwidth\undefined%
    \setlength{\unitlength}{545.0019722bp}%
    \ifx\svgscale\undefined%
      \relax%
    \else%
      \setlength{\unitlength}{\unitlength * \real{\svgscale}}%
    \fi%
  \else%
    \setlength{\unitlength}{\svgwidth}%
  \fi%
  \global\let\svgwidth\undefined%
  \global\let\svgscale\undefined%
  \makeatother%
  \begin{picture}(1,0.28223907)%
    \lineheight{1}%
    \setlength\tabcolsep{0pt}%
    \put(0,0){\includegraphics[width=\unitlength,page=1]{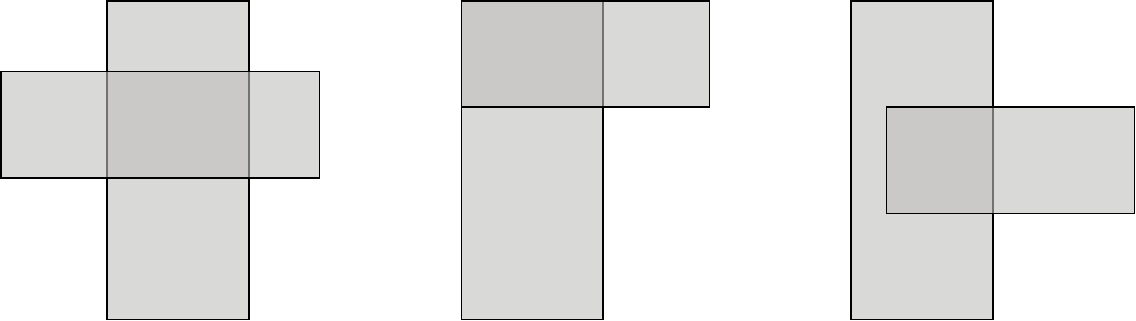}}%
    \put(0.15587668,0.06450018){\color[rgb]{0,0,0}\makebox(0,0)[t]{\lineheight{1.25}\smash{\begin{tabular}[t]{c}$r_1$\end{tabular}}}}%
    \put(0.03928711,0.17304158){\color[rgb]{0,0,0}\makebox(0,0)[t]{\lineheight{1.25}\smash{\begin{tabular}[t]{c}$r_2$\end{tabular}}}}%
    \put(0.46860098,0.05621733){\color[rgb]{0,0,0}\makebox(0,0)[t]{\lineheight{1.25}\smash{\begin{tabular}[t]{c}$r_1$\end{tabular}}}}%
    \put(0.57989677,0.22927225){\color[rgb]{0,0,0}\makebox(0,0)[t]{\lineheight{1.25}\smash{\begin{tabular}[t]{c}$r_2$\end{tabular}}}}%
    \put(0.80990166,0.05357135){\color[rgb]{0,0,0}\makebox(0,0)[t]{\lineheight{1.25}\smash{\begin{tabular}[t]{c}$r_1$\end{tabular}}}}%
    \put(0.94438358,0.14386542){\color[rgb]{0,0,0}\makebox(0,0)[t]{\lineheight{1.25}\smash{\begin{tabular}[t]{c}$r_2$\end{tabular}}}}%
  \end{picture}%
\endgroup%

    \caption{In the first two cases, $r_1 \leq_{\mathcal R} r_2$. In the last case, $r_1$ and $r_2$ are not comparable.}
    \label{fig:rectangles}
\end{figure}

There is a partial order $<_{\mathcal R}$ on $\mathcal R$ defined as follows. Given two rectangles $r_1,r_2 \subset \ospace$, we say $r_1 <_{\mathcal R} r_2$ if $r_1$ is at least as tall as $r_2$, $r_2$ is at least as wide as $r_1$, and $r_1\neq r_2$. See \cref{fig:rectangles}. There is also a partial order $<_{\F}$ defined on $\mathcal R_\F$. Given two rectangles $R_1,R_2\in \mathcal R_\F$, we say $R_1 <_\F R_2$ if there is positive transversal of nonzero length from the leaf containing $R_1$ to the leaf containing $R_2$. Tautness of $\widehat \F$ implies that $<_\F$ is a partial order.

A map $f:\RR \to \lR$ is called an \emph{$\F$-admissible lift} if 
\begin{enumerate}
    \item for every $r\in \RR$, $f(r)$ is an $\F$-admissible lift of $r$
    \item $f$ is $\pi_1(\Mpunc)$-equivariant
    \item $f$ is order preserving, ie $f(r_1) <_\F f(r_2)$ whenever $r_1 <_{\mathcal R} r_2$.
\end{enumerate}

\begin{lem}
    Let $I$ be an interval contained in a stable leaf in $\ospace$. Parameterize $\pi^{-1}(I)$ as $I\times \R$. Then the restriction of $\F$ to $I\times \R$ is a foliation with branching only in the negative direction. Similarly, if $I$ is an interval contained in an unstable leaf, then the restriction of $\F$ to $I\times \R$ is a foliation with branching only in the positive direction.
\end{lem}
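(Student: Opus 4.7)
The plan is to set up a graph structure on leaves of $\mathcal{G} := \widehat{\F}|_W$ and then rule out positive branching using the pseudo-Anosov dynamics on the weak stable leaf $W = \pi^{-1}(I) \cong I \times \R$.

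First I would show that each leaf of $\mathcal{G}$ is a graph $y = f(x)$ over a connected open subinterval of $I$, and that distinct leaves are pairwise non-crossing. The key input is the coorientation of $\widehat{\F}$ by the transverse flow $\widehat{\phi}$: if a flow line met a leaf $F$ of $\widehat{\F}$ at two distinct points, the flow would exit $F$ on its positive side and then reenter on its negative side, contradicting coherence of the coorientation. Hence $\pi|_F : F \to \ospace$ is an injective open embedding, which specializes to the graph property on $W$. This gives the leaf space of $\mathcal{G}$ a natural partial order: one leaf lies strictly above another wherever both are defined.

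Next I would translate non-separation into graph language. If $L_1, L_2 \in \mathcal{G}$ are non-separated via interpolating leaves $L_n : y = g_n(x)$, with $g_n \to f_1$ locally near some $x_0 \in \operatorname{dom}(f_1)$ and $g_n \to f_2$ locally near some $x_1 \in \operatorname{dom}(f_2)$, then a uniform limit of $g_n$ on $[x_0, x_1]$ would be a continuous graph separating $L_1$ from $L_2$. So $g_n$ must escape to $\pm \infty$ on a bridge interval between the two convergence regions. Escape to $+\infty$ is positive branching; escape to $-\infty$ is negative. The lemma reduces to ruling out the former.

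To derive a contradiction from positive escape, suppose $\sup g_n \to +\infty$ on the bridge, with peaks $r_n = (c_n, M_n) \in W$ and $M_n \to \infty$. The tangent direction of $L_n$ at $r_n$ is horizontal in $W$, i.e., tangent to the strong stable direction. My plan is to apply the backward flow $\widehat{\phi}_{-M_n}$, which preserves $W$ and acts by translation in the $y$-coordinate, sending $r_n$ into the slice $\{y=0\}$. Since $\widehat{\phi}$ does not preserve $\widehat{\F}$, this produces a sequence of pulled-back foliations $\widehat{\phi}_{-M_n}^{*}\widehat{\F}$ on $\widehat M$, all transverse to $\widehat{\phi}$ and inheriting the graph property from the first step. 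By compactness of $M$ I would extract a limiting foliation near a limit of the projected peaks in $M$, and show that the limiting leaf is forced, by the strong stable contraction on $W$, to contain a horizontal arc at the limit peak together with widely-separated points near the limits of the images of $p$ and $q$. This would cut the limiting weak stable leaf in disconnected pieces, contradicting the ``one graph per leaf'' consequence of the first step. The main obstacle is this limiting argument: the non-invariance of $\widehat{\F}$ under the flow makes passing to a limit delicate, and I expect the no-perfect-fits hypothesis is essential for keeping the strong stable direction under control. The symmetric statement for unstable leaves follows by reversing the flow direction throughout.
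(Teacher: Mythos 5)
Your Steps~1 and~2 are fine (the graph structure and the reduction of positive branching to a bridge of leaves escaping to $+\infty$), but Step~3, the heart of the matter, does not work as written and is also much harder than the paper's proof.

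The core problem with your Step~3 is the limit extraction. The pulled-back foliations $\widehat\phi_{-M_n}^{*}\widehat\F$ descend to $\phi_{-M_n}^{*}\F$ on $M$, but these have no uniform transversality to $\phi$: the derivative $D\phi_{-M_n}$ distorts tangent planes without bound, so the angle between $T(\phi_{-M_n}^{*}\F)$ and the flow can degenerate away from the peaks. Without a uniform angle bound (or any other compactness for plane fields/foliations), ``extract a limiting foliation'' is not justified. Moreover, after applying $\widehat\phi_{-M_n}$, the points near $L_1$ and $L_2$ that you want to appear in the limit have flow coordinate tending to $-\infty$, so they escape any compact region; the purported ``widely-separated points near the limits of $p$ and $q$'' simply are not there in a local limit at the peak. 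Finally, your remark that the no-perfect-fits hypothesis is essential is a misdirection: the lemma is a statement about a single weak stable leaf of the lifted flow and makes no reference to the global hypothesis on $\phi$, and the paper's proof indeed never uses it.

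The paper's proof is a short direct computation that avoids limits entirely. Because $\F$ is transverse to $\phi$ and $M$ is compact, the \emph{metric} slope of $\widehat\F$ against the flowlines in $W$ is uniformly bounded; because $I$ lies in a stable leaf, the flow contracts the $I$-direction exponentially as $z\to+\infty$. Combining, in the $(x,z)$ coordinates of $I\times\R$ one gets a bound of the form $|dz/dx|\le C e^{-\lambda z}$, which integrates to $|e^{\lambda z(x)}-e^{\lambda z(x_0)}|\le C\lambda|x-x_0|$. Hence any leaf component that reaches height $z\ge z_0$ (for a fixed $z_0$ depending only on $C,\lambda$) at some $x_0$ must be a graph over all of $I$ with $z(x)$ bounded below. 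This immediately forbids a leaf from escaping to $+\infty$ at an interior point of $I$, which is exactly what positive branching requires after your Step~2. No flowing, no sequences of foliations, no limits. You have the right target after Step~2, but the mechanism you should be using is this elementary integrated slope estimate, not a compactness argument on pulled-back foliations.
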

\begin{proof}
    If $I\cong [0,1]$ is contained in a stable leaf, then flowlines in $I\times \R$ eventually stay within distance $\varepsilon$ of one another. The angle between $\F$ and flowlines of $\phi$ is bounded below. Therefore, for $z$ large enough, every leaf of $\widehat \F$ intersecting $0\times [z,\infty)$ also intersects $1\times \R$.
\end{proof}

\begin{lem}\label{lem:flowbox_structure}
    Given a closed rectangle $r\in \mathcal R$, parameterize $\pi^{-1}(r)$ as $r\times \R$ with coordinates $x,y,z$ where $x\in[0,1]$ parameterizes the stable direction, $y\in[0,1]$ parameterizes the unstable, and the flow is $(x,y,z) \mapsto (x,y,z+t)$. Let $\widehat \lambda$ be a leaf of $\widehat \F$ intersecting $r\times \R$. Thinking of $\widehat \lambda$ as the graph of a function $h:\pi(\widehat \lambda)\cap r \to \R$, at least one of the following holds:

    \begin{enumerate}
    \item $h$ is bounded below, and $\pi(\widehat \lambda) \cap r = I \times [0,1]$ for some interval $I \subseteq [0,1]$.
    \item $h$ is bounded above, and $\pi(\widehat \lambda) \cap r = [0,1] \times I$ for some interval $I\subseteq [0,1]$.
    \end{enumerate}
    Moreover, there is at least one leaf $\widehat \lambda$ for which both hold, and $\pi(\widehat \lambda) \supset r$. In other words, the foliation looks at worst like \cref{subfig:box1}.
    
\end{lem}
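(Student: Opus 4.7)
My plan is to analyze the $1$-dimensional restrictions of $\widehat{\lambda}$ to stable and unstable curtains inside the flow box $r\times\R$, using the one-sided branching established in the preceding lemma. For each stable leaf segment in $r$, the corresponding $2$-dimensional stable curtain meets $\widehat{\lambda}$ in a $1$-dimensional submanifold: the graph of $h$ restricted to that stable interval. By the preceding lemma, this slice either extends fully across the stable interval or $h$ diverges to $-\infty$ at an interior endpoint (the negative direction in which branching occurs). The symmetric statement holds for unstable curtains, with $h$ diverging to $+\infty$ at interior endpoints.

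Conclusions $(1)$ and $(2)$ then follow as a dichotomy. If $h$ is bounded below on $U := \pi(\widehat{\lambda})\cap r$, then no stable slice can diverge to $-\infty$, so every stable slice extends fully across its interval. This forces $U$ to be a union of complete stable leaves meeting it, which in the parameterization of the lemma reads $U = I\times[0,1]$ (where $I$ is the shadow of $U$ in the transverse direction). The case $h$ bounded above and conclusion $(2)$ is symmetric: no unstable slice can diverge to $+\infty$, so every unstable slice extends fully and $U = [0,1]\times I$.

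The main obstacle is showing that at least one of these two cases must always apply, i.e., that $h$ cannot be unbounded both above and below on $U$. If it were, $U$ would inherit both an interior ``$-\infty$ wall'' coming from non-extending stable slices and an interior ``$+\infty$ wall'' coming from non-extending unstable slices, confining $U$ to a proper subregion of $r$. I anticipate ruling this out by combining the bounded transverse angle between $\widehat{\F}$ and $\widehat{\phi}$ --- which gives a Lipschitz-type control on $h$ in terms of $e^{\pm z}$ via the hyperbolic-like local geometry of $\pi^{-1}(r)$ --- with the connectedness and $2$-dimensional graph structure of $\widehat{\lambda}$, to show that two such walls cannot coexist inside a single flow box.

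Finally, for the ``moreover'' statement: by compactness of $\overline{r}$ and uniformity of the extension thresholds in the preceding lemma, I would produce a leaf at a ``moderate'' height whose $h$-values on $r$ straddle both thresholds, so that its restrictions to every stable and unstable curtain of $r$ extend fully. Such a leaf exists because the foliation fills $r\times\R$ continuously; for it, $\pi(\widehat{\lambda})\supseteq r$ and $h$ is bounded on $r$, so both $(1)$ and $(2)$ hold simultaneously.
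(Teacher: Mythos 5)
Your derivation of the dichotomy from the one-sided branching of stable and unstable curtains is in the same spirit as the paper, but the step you yourself single out as the main obstacle --- that $h$ cannot be unbounded both above and below --- is precisely the heart of the lemma, and you leave it as an anticipated plan rather than an argument; as written this is a genuine gap. The paper closes it with a short quantitative observation: transversality of $\F$ to $\phi$ with a definite angle, together with forward contraction of the stable direction, bounds $|\partial h/\partial x|$ on any region $\{z>c\}$, while backward contraction of the unstable direction bounds $|\partial h/\partial y|$ on any region $\{z<c\}$. If $h(x_0,y_0)$ were arbitrarily large and $h(x_1,y_1)$ arbitrarily small, then the mixed point $(x_1,y_0)$ --- reached from $(x_0,y_0)$ by moving in the stable direction while $h$ is still large, and from $(x_1,y_1)$ by moving in the unstable direction while $h$ is still small --- would have to have $h$ simultaneously large and small, a contradiction. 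Your proposed ``two walls cannot coexist'' argument would in any case need exactly this two-derivative estimate to exclude an L-shaped configuration, and connectedness of $\widehat\lambda$ alone (or appeals to $e^{\pm z}$ control left unspecified) does not substitute for it; note also that even your case analysis of a single slice quietly uses properness of $\widehat\lambda$ (tautness) to know that a slice which fails to extend actually diverges rather than merely stopping.

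The ``moreover'' clause is also not established by your sketch: uniform thresholds give leaves that cross every stable curtain (those entering the box high enough) and leaves that cross every unstable curtain (those entering low enough), but they do not obviously give a single leaf that does both. The paper gets this softly: let $U^+$ (resp.\ $U^-$) be the set of leaves of $\widehat\F\cap(r\times\R)$ on which $h$ is bounded below (resp.\ above); these are open in the leaf space of the restricted foliation, by the dichotomy they cover it, and that leaf space is connected, so $U^+\cap U^-\neq\emptyset$; any leaf in the intersection has $h$ bounded and its projection contains $r$. You should either reproduce this openness-plus-connectedness argument or give a genuine replacement. Finally, keep the bookkeeping consistent: with $x$ stable and $y$ unstable, boundedness of $h$ below rules out the $-\infty$ dives of stable slices and hence gives full extension in the $x$-direction, i.e.\ a domain of the form $[0,1]\times I$; your text pairs ``stable slices extend fully'' with $I\times[0,1]$ (the printed statement of the lemma uses that convention too, while its proof uses the other), so fix one convention and carry it through.
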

\begin{proof}
    Since $\F$ is taut, $\widehat \lambda$ is properly embedded in $\widehat M$. Therefore $\widehat \lambda$ is properly embedded in $r\times \R$. Note that $|\partial h/{\partial x}|$ is bounded on any region $z>const$, and $|\partial h / \partial y|$ is bounded on any region $z<const$. It follows that $h(x,y)$ cannot take both arbitrarily large and arbitrarily small values. For if $h(x_0,y_0)$ is very large, and $h(x_1,y_1)$ is very small, then $h(x_1,y_0)$ must be simultaneously large and small to satisfy the bounds on $\partial h/\partial x$ and $\partial h/\partial y$.

    Suppose $h(x,y)$ bounded below. Thanks to the upper bound on $\partial h/\partial x$, if $h(x_0,y_0)$ is finite, then so is $h(x,y_0)$ for any $x\in [0,1]$. Therefore, $\pi(\widehat \lambda)\cap r = [0,1]\times I$ for some interval $I \subset [0,1]$. If $I$ has an open endpoint, then $h(x,y)$ must diverge towards $+\infty$ as $y$ approaches this endpoint. If $h(x,y)$ is bounded below, then a similar argument shows that $\pi(\widehat \lambda)\cap r = I \times [0,1]$ for some interval $I \subset [0,1]$.

    Let $U^+$ be the set of leaves of $\widehat \F$ for which $h$ is bounded below, and let $U^-$ be the set of leaves which are bounded above. Both $U^+$ and $U^-$ are open in the leaf space of $\widehat \F \cap (r\times R)$, so $U^+ \cap U^-$ is nonempty. This proves the last assertion that there is a leaf $\widetilde \lambda$ for which $h$ is bounded above and below. 
\end{proof}

\begin{figure}[h!]
    \centering
    \subcaptionbox{\label{subfig:box1}}{
        \def\svgwidth{1.5in}
        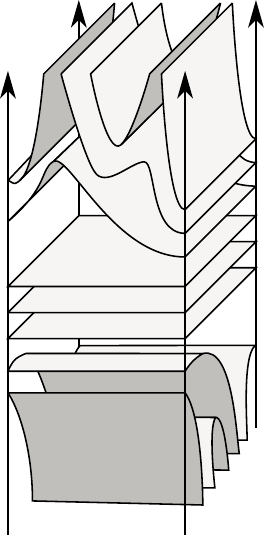
    }
    \hspace{0.4in}
    \subcaptionbox{\label{subfig:box2}}{
        \def\svgwidth{2.2in}
        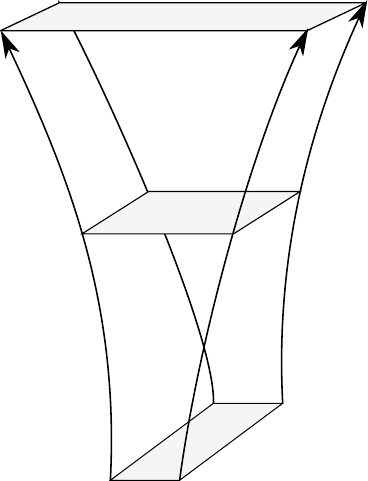
        }
    \caption{(a) shows the restriction of $\widehat \F$ to a flow cylinder, and (b) shows more realistic geometry of the same flow cylinder.}
    \label{fig:foliation_box}
\end{figure}

\begin{lem}\label{lem:nottoolow}
    Let $R$ be a rectangle in $\widehat M$ and let $r=\pi(R)$. Let $R'$ be another rectangle in $\widehat M$. Let $$\Omega = \bigcup_{\substack{g\in \pi_1(\Mpunc)\\ \pi(gR') >_{\mathcal R} \pi(R)}} gR' \cap \pi^{-1}(r).$$ Then $\Omega \subset r \times [C,\infty)$ for some constant $C$. Moreover, $\Omega$ contains only a finite number of rectangles in any compact subset of $\pi^{-1}(r)$.
\end{lem}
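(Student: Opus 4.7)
The plan combines proper discontinuity of the $\pi_1(\Mpunc)$-action on $\widehat M$ with the flow-box structure from \Cref{lem:flowbox_structure}. The second assertion (finiteness in compact subsets) is immediate: any compact subset of $\pi^{-1}(r)$ is compact in $\widehat M$, and since $R'$ is compact, proper discontinuity yields only finitely many translates $gR'$ meeting it.

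For the main assertion, let $\widehat\nu$ be the leaf of $\widehat\F$ containing $R'$, so that $gR' \subset g\widehat\nu$ for each $g$. For $g$ with $\pi(gR') >_\RR r$, I apply \Cref{lem:flowbox_structure} to the leaf $g\widehat\nu$ and the rectangle $r$ (extending $r$ to a maximal rectangle if necessary): the intersection $g\widehat\nu \cap \pi^{-1}(r)$ is a graph over a base of the form $I_g \times [0,1]$ (graph bounded below) or $[0,1] \times J_g$ (graph bounded above). Because $\pi(gR') >_\RR r$ forces $\pi(gR')$ to be at least as wide as $r$ in the unstable direction, the footprint of $gR' \cap \pi^{-1}(r)$ spans the full unstable direction $[0,1]$. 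This leaves two cases: (i) the first alternative holds, so $gR'\cap \pi^{-1}(r)$ sits inside $I_g \times [0,1]$ with its flow coordinate bounded below (by a bound depending on $g$); or (ii) $J_g = [0,1]$, so $g\widehat\nu$ covers all of $r$ with graph bounded above.

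To upgrade to a uniform lower bound, I argue by contradiction: suppose some sequence $g_n$ yields points $p_n \in g_nR' \cap \pi^{-1}(r)$ with $z(p_n) \to -\infty$. In case (ii) the leaves are pairwise disjoint and, by tautness of $\widehat\F$, totally ordered in the flow direction; each such leaf contains a compact graph over a fixed closed sub-rectangle $\overline r_0 \subset r$, and proper discontinuity applied to an appropriate compact set built from $\overline r_0$ and a fixed interval in the flow direction rules out infinitely many type-(ii) leaves accumulating from below. In case (i), compactness of $R'$ gives a subsequential limit $g_n^{-1}p_n \to q \in R'$; then proper discontinuity applied to a small compact neighborhood of $q$, combined with the graph description of $g_n\widehat\nu$ near $g_nq$, produces a contradiction. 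I expect the main obstacle to be the uniformity in case (i): one must control the graph-depth of $g_n\widehat\nu$ independently of $g_n$, which I believe requires exploiting the rigid flow profile of $R'$ inside the single leaf $\widehat\nu$ and comparing with the starting rectangle $R$ to pin down how far below $R$ the translates can dip while preserving the wider-and-shorter hypothesis.
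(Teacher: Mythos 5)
Your finiteness argument (proper discontinuity applied to compact subsets of $\pi^{-1}(r)$) is correct and matches the paper. However, the main assertion --- that $\Omega$ is bounded below in the flow coordinate --- is not actually proved in your write-up, and the route you sketch cannot be completed with the tools you invoke.

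The central difficulty, which you acknowledge at the end, is precisely the uniformity: $\pi^{-1}(r)$ is non-compact in the flow direction, so proper discontinuity says nothing about translates $g_nR'$ marching off to $z = -\infty$. In your case (ii), the graphs you describe are compact but there is no fixed compact set that they would all have to meet, so proper discontinuity does not apply; and in case (i) you explicitly leave the depth control unresolved. The paper closes this gap with a different and essential idea: it exploits Gromov hyperbolicity of the weak stable leaves of $\widehat\phi$. Since $\pi(g_iR') >_\RR r$ forces $\pi(g_iR')$ to span the full \emph{stable} extent of $r$ (note: stable, not unstable as you wrote --- check the convention in the partial order against the coordinates of \cref{lem:flowbox_structure}), each $g_iR'$ contains a stable segment joining the flowlines $\{0\}\times\{y_0\}\times\R$ and $\{1\}\times\{y_0\}\times\R$ at height roughly $-i$. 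These two flowlines are quasigeodesics in a common weak stable leaf with the same forward ideal endpoint but distinct backward endpoints, so the distance between them inside that leaf diverges as $z\to-\infty$. On the other hand, the stable leaves of $R'$ have fixed length and $g_i$ is an isometry, so the stable segments inside $g_iR'$ have uniformly bounded length. This contradiction gives the bound $C$. Your proposal never uses the negative curvature of the stable leaves, and without it (or some equivalent mechanism converting ``far backward'' into ``long in the stable direction'') the argument does not go through.
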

\begin{proof}
     Roughly speaking, the idea is that if a sequence of rectangles $g_i R'$ were to intersect our flow cylinder $\pi^{-1}(r)$ arbitrarily far backwards in the flow direction, the those rectangles would have to be very long in the stable direction. As above, parameterize the flow cylinder $\pi^{-1}(r)$ as $[0,1]\times[0,1]\times \R$. Suppose now that there is a sequence of elements $g_i\in \pi_1(M)$ such that $\pi(g_iR')>_{\mathcal R} r$ and $g_iR'$ intersects $[0,1]\times[0,1] \times (-\infty, -i]$. In particular, $g_iR'$ touches a point of the form $0\times y_0 \times N$ and $1 \times y_0 \times M$ for some $N,M < -i$. Now the following two facts are at odds with one another. First, the lengths of the leaves of the induced stable foliation on $g_i R'$ are uniformly bounded above since $g_i$ is an isometry. Second, let $\widehat \lambda$ be the weak stable leaf containing $0 \times y_0 \times \R$ and $1 \times y_0 \times \R$. Any weak stable leaf is Gromov-hyperbolic. Furthermore, the flowlines $0 \times y_0 \times \R$ and $1 \times y_0 \times \R$ are quasigeodesics in $\widehat \lambda$ with the same forward endpoint and different backward endpoints at infinity. Therefore, the distance between $0 \times y_0 \times (-\infty, -i]$ and $1 \times y_0 \times (-\infty, -i]$ in $\widehat \lambda$ diverges as $i\to \infty$. Contradiction.

     Only finitely many translates of $R'$ can intersect a compact subset of $\pi^{-1}(r)$ because a ball downstairs in $M$ which doesn't cross a singular orbit touches finitely many times the projection of $R'$ to $M$.
\end{proof}

\begin{lem}\label{lem:a}
    There exists an $\F$-admissible lift of $\mathcal R$.
\end{lem}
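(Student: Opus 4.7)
The plan is to construct $f$ by finite induction over the $\pi_1(\Mpunc)$-orbits of $\RR$. Since $\RR$ is in natural bijection with lifts of the tetrahedra of $\tau$ to $\widehat M$, there are only finitely many such orbits; fix representatives $r_1,\dots,r_n$. By \cref{lem:flowbox_structure}, for each $r_i$ the set of $\F$-admissible lifts is nonempty and, after parametrizing via intersection with a fixed flow line through a reference point of $r_i$, corresponds to an open interval of heights in $\R$. I pick lifts $R_i$ one at a time in order of increasing $i$, and extend equivariantly via $f(gr_i):=gR_i$.

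I would next translate order preservation into a system of height inequalities. After choosing $R_j$ for $j<i$, order preservation requires that $R_i$ lie below every translate $gR_j$ with $gr_j>_\RR r_i$ inside $\pi^{-1}(r_i)$, and above every translate $gR_j$ with $gr_j<_\RR r_i$. \cref{lem:nottoolow} (applied to $R=R_i$, $R'=R_j$) gives that the former set is contained in $r_i\times[C^+_{ij},\infty)$ for some constant $C^+_{ij}$. A symmetric statement, obtained by interchanging the roles of the stable and unstable foliations and the direction of the flow in the proof of \cref{lem:nottoolow}, gives that the latter set is contained in $r_i\times(-\infty,C^-_{ij}]$. Consequently, the heights admissible for $R_i$ form the intersection of the interval from \cref{lem:flowbox_structure} with $\bigcap_{j<i}(C^-_{ij},C^+_{ij})$.

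The main obstacle is checking that this intersection is nonempty at each step. The crucial observation is that $C^-_{ij}$ arises from translates $gR_j$ with $gr_j<_\RR r_i$, while $C^+_{ij'}$ arises from translates $g'R_{j'}$ with $g'r_{j'}>_\RR r_i$; transitivity of $<_\RR$ yields $gr_j<_\RR g'r_{j'}$, and the inductive order preservation among the previously chosen $R_1,\dots,R_{i-1}$ promotes this to $gR_j<_\F g'R_{j'}$. Standard properties of taut foliations in a flow cylinder then imply that, wherever both leaves appear in $\pi^{-1}(r_i)$, the heights of $gR_j$ are strictly less than those of $g'R_{j'}$; taking suprema on the left and infima on the right forces $C^-_{ij}<C^+_{ij'}$, so the intersection is indeed an open interval.

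Finally, I would fold the self-interaction case $j=i$ into the choice of $R_i$. The finiteness clause of \cref{lem:nottoolow} guarantees that only finitely many translates $gR_i$ with $gr_i >_\RR r_i$ (and likewise with $gr_i <_\RR r_i$) can intersect any compact subset of $\pi^{-1}(r_i)$, so as I move $R_i$ down in its admissible interval only finitely many self-translates impose a lower bound, and I can choose $R_i$ low enough to lie below all of them; the upper self-bounds hold automatically since $R_i$ cannot be strictly below itself. A minor additional check handles the (generically trivial) stabilizer of $r_i$, where one picks $R_i$ inside its fixed-point set on the admissible interval. This completes the induction and produces the required $\F$-admissible lift.
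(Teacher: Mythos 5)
Your overall strategy mirrors the paper's: induct over a finite set of orbit representatives $r_1,\dots,r_n$, use \cref{lem:flowbox_structure} to get a nonempty interval of admissible leaves, use \cref{lem:nottoolow} (and its symmetric counterpart) to bound the constraints coming from already-chosen translates above and below, and use transitivity of $<_{\mathcal R}$ plus the inductive hypothesis to see that the lower bounds sit strictly below the upper bounds. That part is sound and is essentially what the paper does with its $\Omega_1,\Omega_2$ and $\omega_1,\omega_2$.

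The gap is in your treatment of the self-interaction case $j=i$. You try to satisfy the constraints coming from translates $gR_i$ by ``moving $R_i$ down in its admissible interval'' until it clears the finitely many lower bounds. This is circular: the constraint is $R_i >_\F gR_i$ (resp.\ $R_i <_\F gR_i$), and $gR_i$ is itself defined as $g\cdot R_i$, so it moves when $R_i$ moves. There is no fixed threshold to clear. The final claim that ``the upper self-bounds hold automatically since $R_i$ cannot be strictly below itself'' conflates $R_i$ with $gR_i$, which is a different lift. The correct argument, used in the paper at the $r_1$ stage and invoked again for $r_{k+1}$, is a contradiction by iteration: if $gr_i >_{\mathcal R} r_i$ but $gR_i \leq_\F R_i$, then by equivariance $g^nR_i \leq_\F R_i$ for all $n\geq 1$, and since $g^nr_i >_{\mathcal R} r_i$ for all $n$, this produces infinitely many translates of $R_i$ below $R_i$ inside $\pi^{-1}(r_i)$, contradicting \cref{lem:nottoolow}. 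This shows the self-constraints hold automatically for \emph{any} choice of $R_i$ in the interval $I$ of leaves covering $r_i$, with no need to adjust $R_i$ at all. You need to replace your last paragraph with this argument.
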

\begin{proof}
    Choose representatives $r_1\dots r_n$ for the $\pi_1(\Mpunc)$ orbits in $\mathcal R$. We will construct a lift $f$ by defining $f$ on $r_1,\dots,r_n$ one at a time. As noted in \cref{lem:flowbox_structure}, the restriction of $\widehat \F$ to $\pi^{-1}(r_1)$ has some leaf $\widehat \lambda$ such that $\pi(\widehat \lambda)\supseteq r_1$. Therefore, we can define $f(r_1)$ to be the lift of $r_1$ to $\widehat \lambda$. Extend $f$ to all the $\pi_1(\Mpunc)$-translates of $r_1$ equivariantly.

    Let us verify admissibility. Since $gr_1 > r_1$, there is a closed orbit representing $g$ from $r_1$ to $r_2$.
    
    If $g r_1 >_{\mathcal R} r_1$, then for all $n\geq 1$, $g^n r_1 >_{\mathcal R} r_1$. Suppose for the sake of contradiction that $f(gr_1) \leq_{\F} f(r_1)$. Then $g^n \, f(r_1) \leq_\F f(r_1)$ for all $n$. This violates \cref{lem:nottoolow}. Therefore, we must have had $f(gr_1) \geq f(r_1)$.

    Suppose we want to choose an $\F$-admissible lift for a rectangle $r_{k+1}$ having already chosen $\F$-admissible lifts for the orbits corresponding with $r_1,\dots,r_k$. Let $$\Omega_1 = \{f(gr_i) \mid g\in \pi_1(M),\, 1\leq i \leq k,\, gr_i >_{\mathcal R} r_{k+1} \}$$ and
    $$\Omega_2 = \{f(gr_i) \mid g\in \pi_1(M),\, 1\leq i \leq k,\, gr_i <_{\mathcal R} r_{k+1} \}$$

Both $\Omega_1$ and $\Omega_2$ are non-empty because there are translates of $r_1$ which are greater or less than $r_{k+1}$. Since $f$ is an $\F$-admissible lift of $\{r_1,\dots,r_k\}$, we know that $R_1 >_\F R_2$ for any $R_1\in \Omega_1, R_2 \in \Omega_2$. Let $I$ be the interval in the leaf space of $\widehat \F \cap \pi^{-1}(r_{k+1})$ consisting of leaves whose $\pi$ image covers $r_{k+1}$. This interval is nonempty by \cref{lem:flowbox_structure}. Let $\omega_1,\omega_2$ be the subsets of $I$ consisting of leaves containing rectangles in $\Omega_1$, $\Omega_2$ respectively. By the structure theorem in \cref{lem:flowbox_structure}, $\omega_1$ does not contain any leaf below $I$ and $\omega_2$ does not contain any leaf above $I$. Since our lift is $\F$ admissible so far, that $\lambda_1 >_{\F} \lambda_2$ for any $\lambda_1 \in \omega_1, \lambda_2 \in \omega_2$. By \cref{lem:nottoolow}, $\inf(\omega_1)$ exists (or $\omega_1$ is empty) and similarly $\sup(\omega_2)$ exists (or $\omega_2$ is empty). Moreover, by the same lemma, the infimum and supremum are realized. Therefore, there is some element $\lambda \in I$ satisfying $$\omega_1 >_\F \lambda >_\F \omega_2.$$ We choose $f(r_{k+1})$ to be $\lambda$. As in the argument for $r_1$, the admissibility condition holds for all pairs of translates of $r_{k+1}$. By construction, admissibility holds for all other pairs of rectangles.
\end{proof}

\begin{lem}\label{lem:b}
    Given an $\F$-admissible lift $f:\RR \to \lR$, $\F^\circ$ can be split open to a lamination fully carried by $\skel$.
\end{lem}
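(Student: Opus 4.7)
The plan is to use the $\F$-admissible lift $f$ to extract a candidate solution for the holonomy problem in the sense of \Cref{subsec:holonomyproblem}, which then automatically determines a lamination fully carried by $\skel$; the remaining task is to verify that this lamination is a splitting of $\F^\circ$.

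I work in the universal branched cover $\widehat M$. Fix an edge $\widehat e$ of $\widehat \tau$; it projects to a flowline segment $\ell_{\widehat e}$ connecting two singular orbits. The $n_e + m_e$ tetrahedra of $\widehat\tau$ containing $\widehat e$ correspond to $n_e + m_e$ maximal rectangles $r_1, \dots, r_{n_e+m_e}$ in $\ospace$, each of which contains the thin edge rectangle spanned by the two singular corners of $\widehat e$ and therefore contains $\pi(\ell_{\widehat e})$. Consequently each $f(r_i)$ meets $\ell_{\widehat e}$ in a unique point $p_i$. The combinatorics of fans in a veering triangulation forces two things: consecutive rectangles within a single fan form a $<_\RR$-chain (each flip along the fan extends one dimension and shrinks the other), and any rectangle in one fan is $<_\RR$-comparable to every rectangle in the opposite fan, with a uniform direction of comparison, since the two fans extend the edge rectangle in perpendicular stable/unstable directions. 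The order-preserving property of $f$, together with the transversality of $\ell_{\widehat e}$ to $\widehat \F$, then linearly orders the $p_i$ along $\ell_{\widehat e}$: all points of one fan lie above all points of the other, and each fan's points are monotone along $\ell_{\widehat e}$. The resulting interleaving specifies an orientation-preserving homeomorphism $f_e : [0, n_e] \to [0, m_e]$, and equivariance of $f$ makes this descend to a well-defined $f_e$ for each edge $e$ of $\tau$.

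Applying the construction of \Cref{subsec:holonomyproblem} to $\{f_e\}$ produces a lamination $\Lambda$ fully carried by $\skel$. To verify that $\Lambda$ is a splitting of $\F^\circ$, I lift again to $\widehat M$. By construction, the tile of $\widehat \Lambda$ inside each tetrahedron is a sub-disk of some $f(r_T)$, hence contained in the leaf of $\widehat \F$ through $f(r_T)$. The ordering of the $p_i$ along $\ell_{\widehat e}$ matches the vertical order of the leaves of $\widehat \F$ containing the incident tiles, so the tiles glue along the edges of $\widehat\tau$ into a sublamination of $\widehat \F$. Each complementary region of $\widehat \Lambda$ is a flow cylinder over one tetrahedron, trivially foliated by $\widehat \F$, and collapsing these along $\widehat \phi$ recovers $\widehat \F$.

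The main obstacle is the combinatorial claim that the rectangles around $\widehat e$ are $<_\RR$-ordered in the way described: within-fan rectangles form a chain, and any within-fan rectangle dominates every rectangle of the opposite fan. These facts belong to the dictionary between edges of $\tau$ and rectangles of $\ospace$ for veering triangulations; the within-fan assertion follows because successive tetrahedra in a fan share a face whose corresponding rectangle is a sub-rectangle of both, while the cross-fan assertion reflects that the two fans elongate the edge rectangle along perpendicular stable and unstable directions. Once this dictionary is spelled out, the admissibility hypotheses on $f$ combined with \cref{lem:flowbox_structure} translate the $<_\RR$-order directly into the required linear order of the $p_i$ along $\ell_{\widehat e}$.
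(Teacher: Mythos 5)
Your proposal takes a genuinely different route from the paper. You try to first extract a discrete candidate solution $\{f_e\}$ from the lift $f$, build the lamination $\Lambda$ abstractly via the recipe of \cref{subsec:holonomyproblem}, and then argue $\Lambda$ coincides with a splitting of $\F^\circ$. The paper works in the opposite direction: it cuts $\widehat\F$ directly along the disks $f(\pi(T_r))$ (where $T_r$ is the lifted tetrahedron realized transverse to the flow via \cite[Theorem~5.1]{landry.minsky.ea.FlowsGrowthRates}), and then constructs an explicit flowline-by-flowline collapse map $g_\gamma$, whose monotonicity comes from order-preservation of $f$ and whose surjectivity yields full carriage. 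That explicit map is the technical heart of the lemma, and it never appears in your argument.

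The main gap is therefore in your last paragraph. Asserting that ``the tiles glue along the edges of $\widehat\tau$ into a sublamination of $\widehat\F$'' and that ``collapsing these along $\widehat\phi$ recovers $\widehat\F$'' is a restatement of the conclusion, not a proof of it: you never produce the isotopy (or the collapse) relating the abstractly constructed $\Lambda$ to $\F^\circ$. Moreover, the extraction of $f_e$ is not well-posed. A linear order on the finitely many points $p_i=f(r_i)\cap\ell_{\widehat e}$ determines only combinatorial data, not an orientation-preserving homeomorphism $[0,n_e]\to[0,m_e]$; the honest $f_e$ must be the holonomy of $\F^\circ$ transported across $e$, and defining that holonomy already presupposes that $\F^\circ$ has been pushed onto a neighborhood of $\skel$, which is exactly what the lemma is supposed to prove. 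There are smaller problems as well: an edge $\widehat e$ of $\widehat\tau$ does \emph{not} project to a flowline segment (in the LMT realization the $2$-skeleton, and hence every edge, is transverse to the flow), so $\ell_{\widehat e}$ should be a flowline over an interior point of the edge rectangle rather than the image of the edge itself; and the two sentences ``all points of one fan lie above all points of the other'' and ``the resulting interleaving specifies a homeomorphism'' are in tension, since a uniform direction of comparison across the fans leaves nothing to interleave. The fan-ordering claims you invoke are plausible but unproved, and sorting them out would still leave the isotopy construction as the missing crux.
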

\begin{proof}
    By \cite[Theorem 5.1]{landry.minsky.ea.FlowsGrowthRates}, the veering triangulation $\tau$ may be realized in $\Mpunc$ with 2-skeleton positively transverse to the flow. For each rectangle $r\subset \ospace$, let $T_r\subset \ospace$ be the shadow of the corresponding tetrahedron of $\widehat \tau$. \cite{landry.minsky.ea.FlowsGrowthRates} arranges that $\pi(T_r)$ is a quadrilateral inscribed in $r$. Delete the singular orbits of $\widehat M$, then cut open $\widehat \F$ along $f(\pi(T_r))$ and blow air into the incision. This is the inverse of the blowdown operation described in \cref{subsec:holonomyproblem}. After projecting to $\Mpunc$, we claim that the resulting lamination is isotopic to one carried by $\skel$. We describe the isotopy more explicitly below.
    
    Let $\gamma$ be an orbit of $\lift \phipunc$. It intersects the interiors of bi-infinite sequence of edges or faces of $\lift \skel$. Call this sequence $\dots, x_{-1},x_0,x_1,x_2,\dots$. Define a map $g_{\gamma}:\gamma \to \bigcup_i (\gamma \cap x_i)$ as follows. For each $i$, let $S_i$ be the set of rectangles in the image of $f$ corresponding with a tetrahedron incident to $x_i$. See \cref{fig:lift}. Then $g_\gamma$ maps the half-open interval $(\min(S_i)\cap \gamma, \max(S_i)\cap \gamma]$ to $\gamma \cap x_i$. Since $f$ is strictly order preserving, $g_\gamma$ is monotonic and surjective. Also, the points of discontinuity of $g_\gamma$ are precisely the points of intersection between $\gamma$ and $\bigcup_r f(\pi(T_r))$.
    
    Now assemble the various $g_\gamma$ together into a big map $g:\lift \Mpunc \to \lift \Mpunc$. This map preserves orbits of $\lift \phi$, is monotonic on each orbit, and is continuous except at the incisions we made. Now linear interpolation between $g$ and the identity is an isotopy which moves the foliated region onto a neighbourhood of $\lift \skel$. The whole construction is $\pi_1(\Mpunc)$ equivariant, so it descends to an isotopy in $\Mpunc$ which moves the split open foliation onto a neighbourhood of $\skel$. The surjectivity of $g$ justifies our claim that the lamination is fully carried.

    \begin{figure}[h!]
        \centering
        \def\svgwidth{4in}
        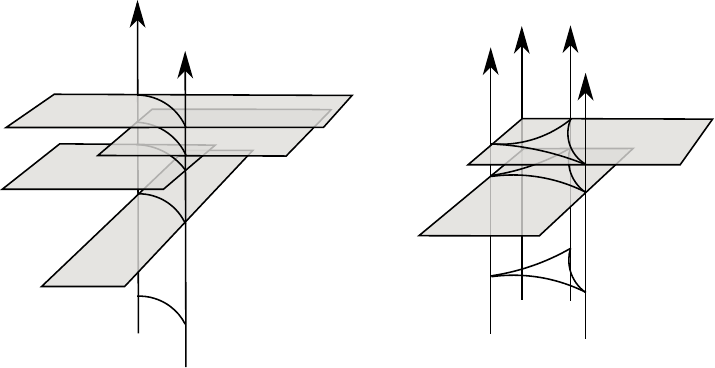
        \caption{On the left, we have shown $S_i$ when $x_i$ is an edge. In general, $|S_i|$ depends on the sizes of the fans at $x_i$. On the right is $S_i$ when $x_i$ is a triangle. In this case, $|S_i|=2$.}
        \label{fig:lift}
    \end{figure}

\end{proof}

\cref{prop:carried} now follows from \cref{lem:a} and \cref{lem:b}.

\begin{rmk}
    It is not easy to apply Li's theory of laminar branched surfaces to $\skel$ because $\skel$ is not in generic position; it has many sheets which come together along a single branch locus and it is not clear which way one should split these apart. On the other hand, because $\skel$ has no triple points, we will see that it is sometimes easier to understand which boundary slopes are carried.
\end{rmk}

\begin{section}{From positive solutions to contact structures}\label{sec:contact}
\begin{lem}\label{lem:contactexist}
    If there is a positive (resp negative) solution to the holonomy problem, then there exists a positive (resp negative) contact structure transverse to $\phi$.
\end{lem}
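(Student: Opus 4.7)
My plan is to start from a positive solution $(f_e)_e$ to the holonomy problem, which by the construction of \cref{subsec:holonomyproblem} gives rise to a foliation $\F^\circ$ on $\Mpunc$ transverse to $\phi^\circ$ whose restriction to each component of $\partial\Mbar$ has positive slope. The overall strategy is to deform $\F^\circ$ into a positive contact structure on $\Mpunc$ still transverse to $\phi^\circ$, and then to extend it across each tubular neighborhood of the removed orbits $\gamma_i$ to obtain a positive contact structure on $M$ transverse to $\phi$.

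The central step is to introduce a small positive twist in the flow direction near the 2-skeleton of $\tau^*$. In a neighborhood of each dual 2-cell $e^*$, the foliation $\F^\circ$ consists of leaves connecting the $n_e$ incoming sides of $e^*$ to the $m_e$ outgoing sides via $f_e$. I replace each $f_e$ by a one-parameter family of gluings that shifts by a small positive amount in the flow direction as one moves around the edge $e$, producing a plane field $\xi^\circ$ on $\Mpunc$ which is transverse to $\phi^\circ$ and satisfies the positive contact condition in a neighborhood of each edge of $\tau$, while remaining an honest foliation elsewhere. An Eliashberg--Thurston-style perturbation for confoliations then upgrades $\xi^\circ$ to a globally positive contact structure on $\Mpunc$; transversality to $\phi^\circ$ is preserved because it is a $C^0$-open condition.

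The final step is to extend the contact structure across each cusp. Each cusp of $\Mpunc$ corresponds to a periodic orbit $\gamma_i$ of $\phi$ with a tubular neighborhood $V_i \cong D^2 \times S^1$ in $M$, whose core is $\gamma_i$ along the $S^1$ factor. On $\partial V_i$, the characteristic foliation of $\xi^\circ$ has positive slope in the basis where the meridian is $0$ and the degeneracy slope of $\phi$ is $\infty$. Standard models for tight contact structures on $D^2 \times S^1$ transverse to the $S^1$ factor realize every positive boundary slope, so $\xi^\circ$ extends across each $V_i$ to a tight positive contact structure transverse to $\gamma_i$. Gluing these extensions yields the desired contact structure on $M$.

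The main obstacle I anticipate is the central perturbation step: choosing the twists at each edge of $\tau$ so that they are globally consistent and produce a genuine positive contact region rather than merely a confoliation, while preserving transversality to $\phi^\circ$. This requires careful attention to the combinatorics of the veering triangulation, in particular the fan sizes $n_e, m_e$ and the cyclic arrangement of faces around each edge, so that the accumulated twist around each face of $\tau^*$ has a definite positive sign. The negative case follows symmetrically by reversing the sign of the twist throughout.
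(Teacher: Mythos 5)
Your overall strategy (foliation, then confoliation, then contact structure, then handle the cusps) is in the right spirit, but both the location of the perturbation and the order of operations differ from the paper's in ways that create real gaps. The paper never perturbs in the interior of $\Mpunc$: it normalizes the boundary foliation of $\F^+$ on $\partial\Mbar$ using 2D mean curvature flow so the characteristic slope is pointwise positive, isotopes so that $T\F^+$ contains $\partial/\partial r$ near the boundary, and then introduces the contact twist only inside the collar $T^2\times[0,\varepsilon)$ by monotonically decreasing the characteristic slope to $0$ (the meridional direction) as $r\to 0$. That choice is precisely what lets the confoliation extend continuously and transversally across the blowdown of the boundary tori, yielding a transverse confoliation on the \emph{closed} manifold $M$; only then is Eliashberg--Thurston (in its $C^0$ form) applied, once, on $M$, where it is available and where the contact region near the cusps is reached by every leaf.

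Your plan has three concrete gaps. First, the central twisting step near the 2-skeleton of $\tau^*$ is left unresolved, and this is the crux: you would need to verify that the one-parameter family of gluings gives a genuine positive confoliation transverse to $\phi^\circ$, which amounts to a positivity condition on holonomy around every small loop transverse to every edge of $\tau$; no mechanism is given to make the twists globally consistent with the veering combinatorics, and you acknowledge this yourself. Second, you invoke an Eliashberg--Thurston-type perturbation on the noncompact manifold $\Mpunc$; the $C^0$ theorems of Kazez--Roberts and Bowden are stated for closed manifolds, and applying them on $\Mpunc$ without controlling the behavior near the cusps, or ensuring compatibility with the later filling, is not justified. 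Third, the filling step asserts that a standard tight contact solid torus can be glued in transversally to $\phi$, but $\phi$ near $\gamma_i$ is a (possibly singular) pseudo-Anosov suspension rather than a product flow, so transversality off the core is not automatic; moreover, matching characteristic foliations on $\partial V_i$ requires first normalizing the boundary foliation, which your proposal never does and which the paper accomplishes via mean curvature flow before tapering the slope to zero.
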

\begin{proof}
    A positive solution to the holonomy problem gives rise to a foliation $\F^+$ on $\Mbar$ transverse to $\phibar$ with positive slope on $\partial \Mbar$. Let us focus on one of the boundary components of $\Mbar$. We would first like to arrange that $\F^+$ has a standard form in a neighbourhood of this boundary component. Parameterize a small neighbourhood of this boundary component as $T^2 \times [0,\varepsilon)$ using variables $\varphi, \theta, r$. Here, $\varphi$ parameterizes the flow direction and $\theta$ parameterizes the meridional direction. We will put $\F^+$ in standard form using 2D mean curvature flow. Endow $T^2$ with its standard product Euclidean metric. Recall the following facts about mean curvature flow:
    \begin{enumerate}
        \item A foliation remains a foliation for all time
        \item Mean curvature flow on a foliation of slope $s$ approaches a linear slope $s$ foliation in the $C^1$ topology.
        \item If two foliations are transverse, then they remain transverse for all time.
    \end{enumerate}

    Now $T^2 \times 0$ has two transverse foliations, one by flowlines of $\phi$ and the other by the restriction of $\F$. Apply mean curvature flow to both. The first foliation has slope $\infty$ and the second has positive slope. Therefore, after some large time, we can guarantee that the two foliations are transverse and as close to linear foliations as desired. In particular, we can choose a large enough time that the slope of $\F|_{T^2}$ is pointwise positive. Call the two isotopies $f_t: T^2 \to T^2$ and $g_t: T^2 \to T^2$, where $t$ ranges over $[0,1]$. We can extend these isotopies to isotopies of $\Mbar$ supported on $T^2 \times [0,\varepsilon]$ by the formula $(\varphi,\theta,r) \mapsto (f_{1-r/\varepsilon}(\varphi,\theta), r)$ and $(\varphi,\theta,r) \mapsto (g_{1-r/\varepsilon}(\varphi,\theta), r)$. For small enough $\varepsilon$, $\F^+$ and $\phi$ remain transverse during this isotopy. 

    As $r\to 0$, the component of $\phi$ in the $r$ direction uniformly goes to zero. Isotope $\F^+$ so that $T\F$ is tangent to the $r$ direction in a neighbourhood of $T^2 \times 0$ and $\F^+$ remains transverse to $\phi$. Now we perturb $T\F^+$ to a confoliation by decreasing the slope of $T\F^+$ down to zero as $r\to 0$. Finally, reparameterize $\phi$ so that it has unit speed in the $\varphi$ direction. Then we may blow down $T^2 \times 0$ to recover a flow which is a reparameterization of $\phi$, along with a transverse confoliation. Finally, apply the Eliashberg--Thurston theorem to perturb the confoliation to a contact structure transverse to $\phi$ \cite{eliashberg.thurston.Confoliations}. Note that this is the easy case of the Eliashberg--Thurston, where every leaf of the confoliation touches the contact region. On the other hand, the confoliation is not necessarily transversely smooth, so it is safest to use the $C^0$ version due to Kazez and Roberts or Bowden \cite{kazez.roberts.C^0ApproximationsFoliations,bowden.Approximating0foliationsContact}.
\end{proof}

\begin{prop}\label{prop:forward}
    If there is a foliation $\F$ transverse to $\phi$, then there is a positive and a negative solution to the holonomy problem.
\end{prop}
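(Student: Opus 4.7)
The plan is to use the Eliashberg--Thurston theorem to deform $\F$ into two auxiliary foliations $\F^+, \F^-$ transverse to $\phi$ whose boundary slopes on every component of $\partial \Mbar$ are positive and negative respectively, and then to apply \cref{prop:carried} to each to extract the required solutions of the holonomy problem. Structurally, the argument is dual to the one in \cref{lem:contactexist}.

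First I would apply the Eliashberg--Thurston theorem to $\F$ to obtain a positive contact structure $\xi^+$ and a negative contact structure $\xi^-$, both chosen $C^0$-close enough to $\F$ to remain transverse to $\phi$. These contact structures are not themselves solutions to the holonomy problem, but they will serve as a guide for modifying $\F$ near each removed orbit $\gamma_i$.

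The main step is to use $\xi^+$ to modify $\F$ near the $\gamma_i$ so as to produce the foliation $\F^+$. For each $\gamma_i$, fix a small flow-invariant solid torus neighborhood $V_i\subset \Mbar$. The positivity of $\xi^+$ combined with its transversality to $\phi$ near $\gamma_i$ should force any characteristic data induced by $\xi^+$ on $\partial V_i$ to lie on the positive side of the degeneracy slope. One then builds a new foliation on $V_i$ having prescribed positive linear slope on $\partial V_i$ and transverse to $\phi$ throughout, and interpolates back to $\F$ across a slightly larger collar $V_i' \setminus V_i$ via a confoliation, mirroring in reverse the mean curvature flow plus confoliation perturbation used in \cref{lem:contactexist}. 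A symmetric construction using $\xi^-$ produces $\F^-$ with negative boundary slopes. Finally, applying \cref{prop:carried} to $\F^+$ and $\F^-$ splits each open to a lamination fully carried by $\skel$; by construction these have the required positive and negative boundary slopes at every cusp, giving the two desired solutions.

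The hardest part will be the middle step. Specifically one must (a) pin down the precise sense in which the positivity of $\xi^+$ guarantees a positive-slope characteristic behavior on $\partial V_i$ relative to the degeneracy slope, and (b) verify that the confoliation interpolation in the collar $V_i'\setminus V_i$ can actually be promoted to a genuine foliation while preserving transversality to $\phi$. This is the step that uses Eliashberg--Thurston nontrivially, and where the analogy with \cref{lem:contactexist} is most delicate.
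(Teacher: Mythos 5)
Your overall instinct---that Eliashberg--Thurston supplies the positivity, and that one should aim for laminations carried by $\skel$ with positive/negative boundary slope---is the right one, but the route you take has a genuine gap at exactly the step you flag as ``hardest.'' The plan is to deform $\F$ near each $\gamma_i$ to a foliation $\F^+$ with positive boundary slope by interpolating through a confoliation and then ``promoting'' that confoliation back to a foliation. Confoliations are not generically integrable: passing from a confoliation (or a contact structure) to a foliation is precisely the hard direction, which in this paper is only available via Massoni's theorem (\cref{thm:massoni}), and that theorem is reserved for the converse implication (\cref{prop:backward}). Reading the ``duality'' with \cref{lem:contactexist} in reverse does not work---\cref{lem:contactexist} goes foliation $\to$ confoliation $\to$ contact structure, and none of Eliashberg--Thurston, mean curvature flow, or the confoliation trick let you go backwards without importing the full strength of Massoni. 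There is also a secondary problem: even if you had such an $\F^+$, it would live on $\Mbar$ with nonzero boundary slope and would not be the restriction of a foliation of $M$, so \cref{prop:carried} as stated (for foliations of the closed manifold $M$ transverse to $\phi$) does not apply to it; you would need to re-prove a version of \cref{prop:carried} for such boundary-sloped foliations.

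The paper avoids both problems by reversing the order of operations and never modifying $\F$ near the $\gamma_i$. It applies \cref{prop:carried} to the \emph{original} $\F$, obtaining a candidate solution with linear zero boundary slope; this candidate is used to rebuild a foliation $\F'$ of $M$ isotopic to $\F$ and transverse to the dual complex $\tau^*$ and its $1$-skeleton. Only then is Eliashberg--Thurston applied, producing $\xi_\pm$ that are $C^0$-close to $\F'$ and hence still transverse to $\tau^*$ and its $1$-skeleton. The characteristic data $\xi_\pm\cap\tau^*$ is then read off directly as a pair of candidate solutions---no integration back to a foliation is needed, because a contact structure intersected with the $2$-cells of $\tau^*$ already gives the data of a candidate solution. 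Positivity and negativity are established by a Bennequin inequality argument: if the $\xi_+$-solution failed to be positive at the $i$-th cusp, one would obtain a negatively transverse unknot $K$ in $\tau^*$ with $sl(K)=1$, which is forbidden. Your item (a) is in the right spirit---contact positivity should control the sign of the boundary slope---but it is used on the characteristic foliation of the contact structure on $\tau^*$, not to steer a modification of $\F$ near $\gamma_i$.
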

\begin{proof}
    By \cref{prop:carried}, $\F$ is carried by $\skel$. Therefore, there is a candidate solution to the holonomy problem with linear zero slope around each cusp of $\Mpunc$. As in \cref{fig:dualtriangles}, we may use this candidate solution to build a foliation of $\tau^*$. Since the foliation has linear zero slope around each cusp, it extends to foliation $\F'$ of $M$ which is isotopic to $\F$, transverse to $\tau^*$, and transverse to the 1-skeleton of $\tau^*$. Now apply the Eliashberg--Thurston theorem to get a pair of tight contact structures $\xi_+$ and $\xi_-$ which $C^0$ approximate $\F'$. These contact structures can be chosen $C^0$ close to $\F'$, so they are also transverse to the 1-skeleton and 2-skeleton of $\tau^*$. Therefore, $\xi_+ \cap \tau^*$ and $\xi_-\cap \tau^*$ give rise to candidate solutions to the holonomy problem. We claim that these solutions are positive and negative respectively. Indeed, suppose $\xi_+ \cap \tau^*$ gives rise to a solution which is not positive at the $i^{th}$ cusp. Then there is a closed curve $K$ in $\tau^*$ negatively transverse to $\xi^+$ bounding a disk in $M \setminus \tau^*$ which intersects $\gamma_i$ positively. Then $sl(K)=1$, violating the Bennequin inequality which in this case says $sl(K)\leq -1$.
\end{proof}

Now we recall Massoni's converse of the Eliashberg--Thurston theorem:

\begin{thm}[{\cite{massoni.TautFoliationsContact}}]\label{thm:massoni}
    If $\xi_-$ and $\xi_+$ are a pair of positive and negative tight contact structures transverse to a common flow $\phi$, then there exists a $C^0$ foliation transverse to the same flow.
\end{thm}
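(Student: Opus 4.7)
The plan is to produce the foliation as a carefully chosen interpolation between $\xi_+$ and $\xi_-$ inside the space of 2-plane distributions transverse to $\phi$. Write $\xi_\pm = \ker \alpha_\pm$; after rescaling by a positive function we may arrange $\alpha_\pm(\phi) \equiv 1$. Then the convex combination $\alpha_t := (1-t)\alpha_- + t\alpha_+$ satisfies $\alpha_t(\phi) \equiv 1$, so $\ker \alpha_t$ remains transverse to $\phi$ for every $t \in [0,1]$. At the endpoints we have $\alpha_+ \wedge d\alpha_+ > 0$ and $\alpha_- \wedge d\alpha_- < 0$, so by continuity at each $p \in M$ there exists $t(p) \in (0,1)$ at which $\alpha_{t(p)} \wedge d\alpha_{t(p)}$ vanishes at $p$. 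The strategy would be to upgrade this pointwise data to a globally integrable 1-form, by searching among forms $\alpha = f\alpha_- + (1-f)\alpha_+$ with $f : M \to [0,1]$ allowed to vary, and solving $\alpha \wedge d\alpha \equiv 0$ for $f$.

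The main obstacle is precisely this global selection problem: pointwise vanishing of $\alpha \wedge d\alpha$ does not immediately yield a globally integrable distribution, since the Frobenius condition is a differential relation on $f$. Tightness of both contact structures should be what guarantees solvability --- an overtwisted disk would present a topological obstruction, essentially a relative Milnor--Wood phenomenon in the spirit of \cref{thm:nofol2}. I would expect the regularity of $f$ to be at best $C^0$, which is why the conclusion of the theorem is stated only for $C^0$ foliations à la Kazez--Roberts or Bowden.

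An alternative route, which may be closer in spirit to Massoni's actual argument, is to work along flow lines of $\phi$. The space of 2-planes at $p$ transverse to $\phi(p)$ is the projectivization of $T_pM / \langle \phi(p)\rangle$, a circle; so $\xi_\pm$ are two sections of a circle bundle $P \to M$, and they cut each fiber into two arcs. Positivity of $\xi_+$ says that under the induced flow map on $P$, the section $\xi_+$ rotates in one definite direction along every orbit; negativity of $\xi_-$ says $\xi_-$ rotates the other way. One then looks for an intermediate section lying in the correct arc whose rotation under the flow is zero, which makes it tangent to a foliation. Assembling locally invariant choices along each flow line into a continuous global section, while staying in the prescribed arc between $\xi_\pm$, is the crux of the argument; tightness is again what prevents the interpolation from being forced to wind all the way around a fiber.
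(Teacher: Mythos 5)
The paper does not prove this statement: it is Massoni's theorem, quoted directly from \cite{massoni.TautFoliationsContact} and used as a black box in the proof of \cref{prop:backward}. There is therefore no ``paper's own proof'' to compare your proposal against, and the relevant question is only whether your sketch resembles the actual argument in the cited work.

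On that question: your first route is not a proof. Solving $\alpha_f \wedge d\alpha_f \equiv 0$ for a function $f\colon M\to[0,1]$ is a genuinely nonlinear first-order PDE whose solvability is the entire content of the theorem, and the pointwise existence of a $t(p)$ killing $\alpha_t\wedge d\alpha_t$ at $p$ does not even produce a continuous plane field, let alone an integrable one --- you acknowledge this, but offering tightness as the reason it ``should'' be solvable is a hope, not an argument. Your second route is closer in spirit to what Massoni does: one does view transverse plane fields as sections of a circle (really $\R P^1$) bundle over $M$ and studies the rotation of sections under the flow, trapping a candidate between $\xi_-$ and $\xi_+$. But the clause ``assembling locally invariant choices along each flow line into a continuous global section'' is exactly where all the difficulty is hidden: on a transitive pseudo-Anosov flow the orbits are dense, so there is no local-to-global gluing, and the limiting object is a priori only a measurable section. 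Massoni's actual construction involves substantially more machinery to upgrade this to a genuine $C^0$ foliation. So neither route constitutes a proof; the second is a fair thumbnail of the geometric setup but omits the hard part.
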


\begin{prop}\label{prop:backward}
    If there is a positive and a negative solution to the holonomy problem, then there is a foliation transverse to $\phi$.
\end{prop}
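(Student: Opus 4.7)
The plan is to directly assemble \cref{lem:contactexist} with Massoni's converse \cref{thm:massoni}. First I would apply \cref{lem:contactexist} twice: the positive solution to the holonomy problem yields a positive contact structure $\xi_+$ transverse to $\phi$, and the negative solution yields a negative contact structure $\xi_-$ transverse to $\phi$. By construction both $\xi_\pm$ are transverse to the same underlying flow $\phi$ on $M$, so the hypothesis of \cref{thm:massoni} on a common transverse vector field is automatic.

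Second, I would verify that $\xi_+$ and $\xi_-$ are tight, which is required to apply \cref{thm:massoni}. As noted in the introduction, a contact structure transverse to a transitive pseudo-Anosov flow without $1$-prong singularities is automatically tight by \cite[Proposition~5.1]{zung.PseudoAnosovRepresentativesStable}. When $\phi$ has $1$-prong singularities, one can reduce to the non-singular case by passing to a suitable finite cover branched over the $1$-prong orbits, where the pulled-back flow has no $1$-prong singularities and the pulled-back contact structures remain transverse to it; tightness in the cover implies tightness downstairs.

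Finally, with $\xi_+, \xi_-$ a pair of positive and negative tight contact structures sharing the transverse flow $\phi$, \cref{thm:massoni} produces a $C^0$ foliation transverse to $\phi$, which is the conclusion of the proposition.

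The only real obstacle is the tightness step in the presence of $1$-prong singularities; everything else is a direct citation of the preceding lemma and Massoni's theorem.
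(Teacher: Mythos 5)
Your proof follows the same overall skeleton as the paper's: build $\xi_\pm$ from \cref{lem:contactexist}, verify tightness, and apply \cref{thm:massoni}. The first and third steps match the paper exactly. The difference is entirely in the tightness step, and there your argument has a gap.

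The paper handles tightness directly, even in the $1$-prong case, by constructing a weak symplectic filling: approximate the flow generator by a smooth divergence-free vector field $\phi'$ (via the blowup procedure in \cite[Section 3]{zung.PseudoAnosovRepresentativesStable}), let $\omega$ be the closed $2$-form dual to $\phi'$, and check that $\omega + d(t\alpha)$ gives a symplectic form on $[-\varepsilon,\varepsilon]\times M$ whose restriction to the boundary is positive on $\xi_+$ and $\xi_-$. This weakly fills $(M,\xi_+)\cup(-M,\xi_-)$, so both are tight. This argument is self-contained and makes no assumptions about covers.

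You instead propose to pass to a finite cover of $M$ branched over the $1$-prong orbits with branching index at least $2$, resolve the singularities there, invoke tightness in the non-$1$-prong setting, and pull back. The missing step is the \emph{existence} of such a finite branched cover. You need a finite-index subgroup of $\pi_1(M\setminus L_1)$ (where $L_1$ is the union of $1$-prong orbits) meeting each meridian subgroup $\langle\mu_i\rangle$ in a proper subgroup. This requires each $\mu_i$ to be nontrivial in $\pi_1(M\setminus L_1)$ (after which residual finiteness of $3$-manifold groups does the rest), which in turn needs something like irreducibility of $M$ plus an argument that no embedded sphere meets a $1$-prong orbit once. None of this is automatic from the hypotheses as stated, and you don't address it. Even granting existence of the cover, you would also need to check that the lifted flow is a genuine transitive pseudo-Anosov flow without $1$-prongs and that the hypotheses of \cite[Proposition 5.1]{zung.PseudoAnosovRepresentativesStable} hold upstairs. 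The filling argument in the paper sidesteps all of these issues and is what you should use.
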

\begin{proof}
    By \cref{lem:contactexist}, there is a pair of contact structures $\xi_+$ and $\xi_-$ transverse to $\phi$. First we verify that $\xi_+$ and $\xi_-$ are tight. For pseudo-Anosov flows without 1-prongs (which in particular have no contractible orbits), this is argued in \cite[Proposition 5.1]{zung.PseudoAnosovRepresentativesStable}. Let's give a slightly different argument that works even in the 1-prong setting. We thank Thomas Massoni for pointing out this argument. First, we find a $C^0$ approximation of the vector field generating $\phi$ by a smooth divergence free vector field $\phi'$. One way to do this is the blowup procedure described in \cite[Section 3]{zung.PseudoAnosovRepresentativesStable}, which works equally well in the 1-pronged case. Now, following Eliashberg--Thurston, we construct a weak symplectic filling of $(M,\xi_+) \cup (-M,\xi_-)$. Let $\omega$ be the closed 2-form dual to $\phi'$, and let $\alpha$ be a contact form whose kernel is $\xi_+$. Use $t$ as a parameter for $[-\varepsilon, \varepsilon]$. Then $\omega + d(t\alpha)$ is a symplectic form on $[-1,1]\times M$. For small enough $\varepsilon$, the restriction of the symplectic form to the boundary is positive on $\xi_+$ and $\xi_-$. Therefore, $[-\varepsilon,\varepsilon]\times M$ is a weak symplectic filling of $(M,\xi_+) \cup (-M, \xi_-)$ and $\xi_+$ and $\xi_-$ must be tight. Now we may apply \cref{thm:massoni} to obtain a foliation transverse to $\phi$.
\end{proof}

\Cref{prop:forward} and \cref{prop:backward} are the two halves of \Cref{thm:converse}.

\end{section}

\begin{section}{Criteria for carriage}\label{sec:examples}
Our veering triangulation induces a triangulation $L$ of $\partial \Mbar$. In this section, we focus on examples where $\partial \overline{\Mpunc}$ has only one component. We draw all our pictures with the degeneracy slope vertical. The triangulation is divided into a number of \emph{ladders} parallel to the degeneracy slope. Each ladder is a triangulated annulus with vertices only its boundary. The boundaries of the ladders are called \emph{ladderpoles}, and the interior edges are called \emph{rungs}. Since $\tau$ is a branched surface, $L$ is a train track. The ladderpoles are all carried by this train track. Ladderpoles alternate between \emph{upward} and \emph{downward}. In an upward ladderpole, the rungs on the left are smoothed upward and the rungs on the right are smoothed downward. The opposite is true for downward ladderpoles. Thus, as one moves from left to right along a curve carried by $L$, one ascends along upward ladderpoles and descends along downward ladderpoles. An important property for us is that the two endpoints of an edge of $\tau$ either both lie on descending ladderpoles or both lie on ascending ladderpoles.

\begin{figure}[h]
\centering
\includegraphics[width=3.5in]{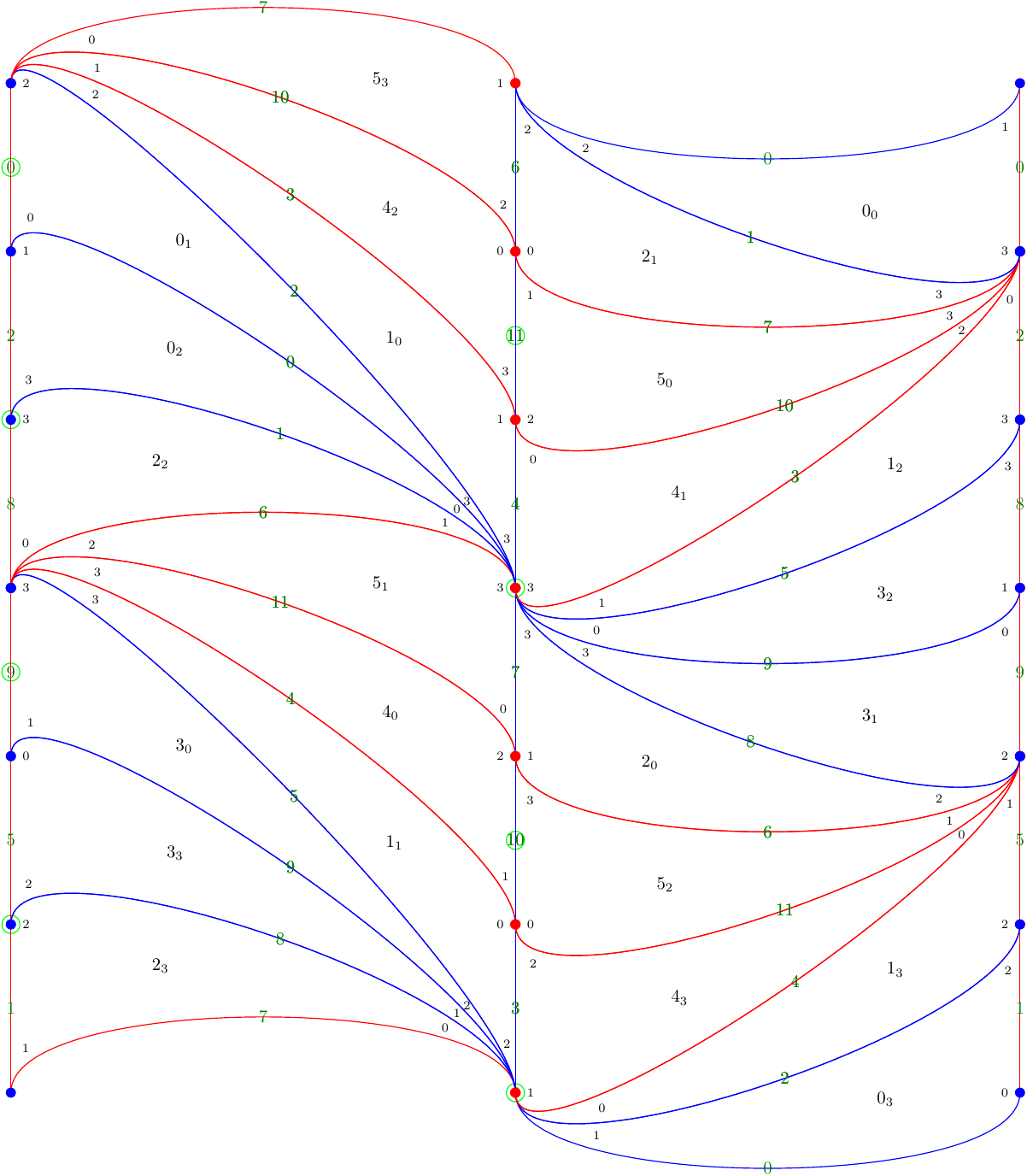}
\caption{The boundary train track for $10_{145}$, reproduced from the veering census \cite{parlak.schleimer.ea.Veering03Code}}\label{fig:10_145}
\end{figure}

Let us study the example of the complement of the knot $K=10_{145}$. Let $\Mpunc=S^3\setminus K$. The train track on $\partial \Mbar$ is shown in \cref{fig:10_145}. There is one upward ladderpole and one downward ladderpole. The left and right sides of the picture are identified, and the top and bottom are also identified. Edges in this diagram are labelled with the index of the corresponding face of $\tau$, and vertices are labelled with the index of the corresponding edge of $\tau$. Recall that a foliation carried by $\skel$ is specified by a choice of a homeomorphism $f_i: [0,n_i] \to [0,m_i]$ for each edge $i$. Here, $n_i$ and $m_i$ represent the number of triangles in either fan around edge $i$.

In this section, we will work with slopes in the coordinate system defined by this picture. At the end we will convert back into the standard slopes on $S^3$. \cref{fig:10_145_slopes} shows some curves which may be realized as leaves of solutions to the holonomy problem. The first is a curve of slope $\leq -1/3$. The second is the longitude, which has slope $-1/6$. The last is the meridian, which has slope $0$. The first curve passes through the edge labelled $A/A^{-1}$ multiple times, so some care must be taken to ensure that the claimed curve is actually realizable. Each fan adjacent to this edge has 5 triangles. Define $A:[0,5]\to [0,5]$ so that
\begin{align*}
    A(1.5) = 1.4\\
    A(3.4) = 1.6\\
    A(3.6) = 2.5
\end{align*}

In \cref{subfig:negative_slope}, the branches incident to $A$ or $A^{-1}$ are labelled with their proposed heights in $[0,5]$. Thus, we have proven:

\begin{lem}\label{lem:slopes1}
   Slopes -1/3 and 0 are realized by solutions to the holonomy problem in \cref{fig:10_145}.
\end{lem}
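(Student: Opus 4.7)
The plan is to exhibit for each of the two target slopes a candidate solution whose boundary lamination contains a closed curve of that slope, which suffices because a single closed leaf of slope $s$ forces the boundary lamination to have slope $s$. In each case I locate such a closed curve carried by the boundary train track of \cref{fig:10_145}, record the point-constraints it places on the edge homeomorphisms $f_e$, and verify these constraints can be jointly extended to orientation-preserving homeomorphisms $[0,n_e] \to [0,m_e]$.

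For slope $0$, the meridian is already visible in $L$ as a simple carried curve. Inspection shows that it crosses each edge of $\tau$ at most once, so each edge contributes at most one constraint of the form $f_e(x_e) = y_e$, which is trivially extendable to an orientation-preserving homeomorphism. Any such extension, together with arbitrary choices at the unconstrained edges, gives a candidate solution whose boundary lamination contains the meridian and hence has slope $0$.

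For slope $-1/3$, the displayed curve in the first panel of \cref{fig:10_145_slopes} crosses each edge at most once, except for the edge labelled $A$, which it crosses three times. These three crossings translate into the three equations $A(1.5) = 1.4$, $A(3.4) = 1.6$, $A(3.6) = 2.5$ on $A \colon [0,5] \to [0,5]$ already recorded in the preceding paragraph. Because both the input sequence $1.5 < 3.4 < 3.6$ and the output sequence $1.4 < 1.6 < 2.5$ are strictly increasing, these three prescribed values are compatible with a monotone homeomorphism of $[0,5]$; one may, for example, extend by piecewise linear interpolation pinning $A(0) = 0$ and $A(5) = 5$. Extending the remaining single-point constraints freely, one obtains a candidate solution whose carried lamination contains the proposed closed curve, hence has slope $-1/3$.

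The main obstacle is combinatorial and lies entirely in reading \cref{fig:10_145}: one must trace each proposed curve through the ladders and ladderpoles, checking at every ladderpole that the smoothing is compatible with the direction of turn taken by the curve, so that after completing a full loop the curve closes up at the claimed slope rather than some nearby slope. One then independently enumerates the triangles in each fan of $A$ in the correct cyclic order to confirm that the three crossings produce precisely the heights $1.5, 3.4, 3.6$ on one fan and $1.4, 1.6, 2.5$ on the opposite fan. Once this bookkeeping is verified directly from the figure, the monotonicity check and the extension to a full candidate solution are immediate.
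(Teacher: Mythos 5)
Your proposal matches the paper's own argument: the paper likewise exhibits the meridian and the displayed curve of slope $-1/3$ as integral curves of candidate solutions, and handles the only multiply-crossed edge by prescribing exactly the three values $A(1.5)=1.4$, $A(3.4)=1.6$, $A(3.6)=2.5$ and extending monotonically. Your extra remarks on tracing the curve through the ladders are just the same figure-reading the paper leaves implicit, so the proof is correct and essentially identical in approach.
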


\begin{figure}[!h]
    \centering
    \subcaptionbox{\label{subfig:negative_slope}}{
    \def\svgwidth{2.75in}
    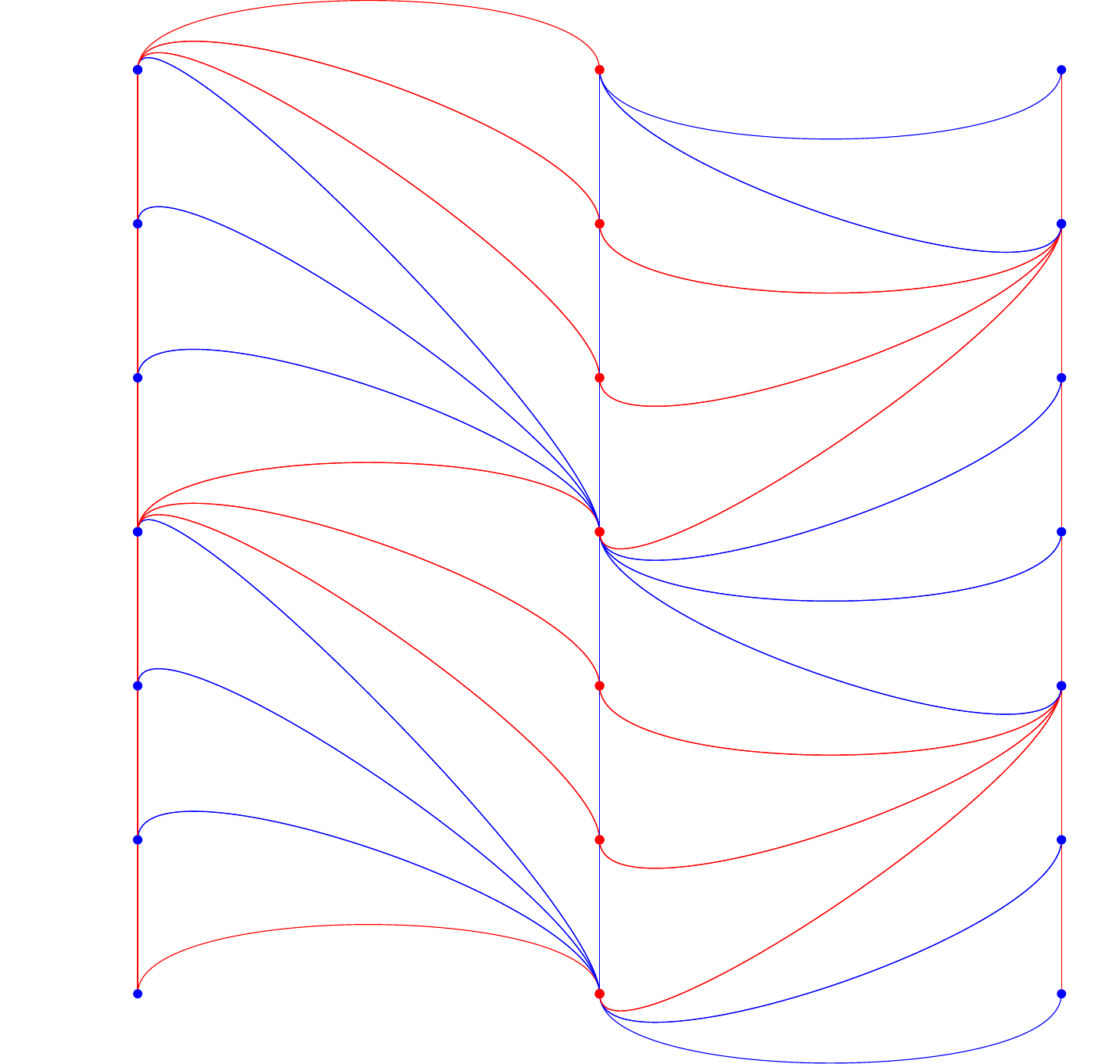
    }
    \subcaptionbox{\label{subfig:longitude}}{
    \def\svgwidth{2.75in}
    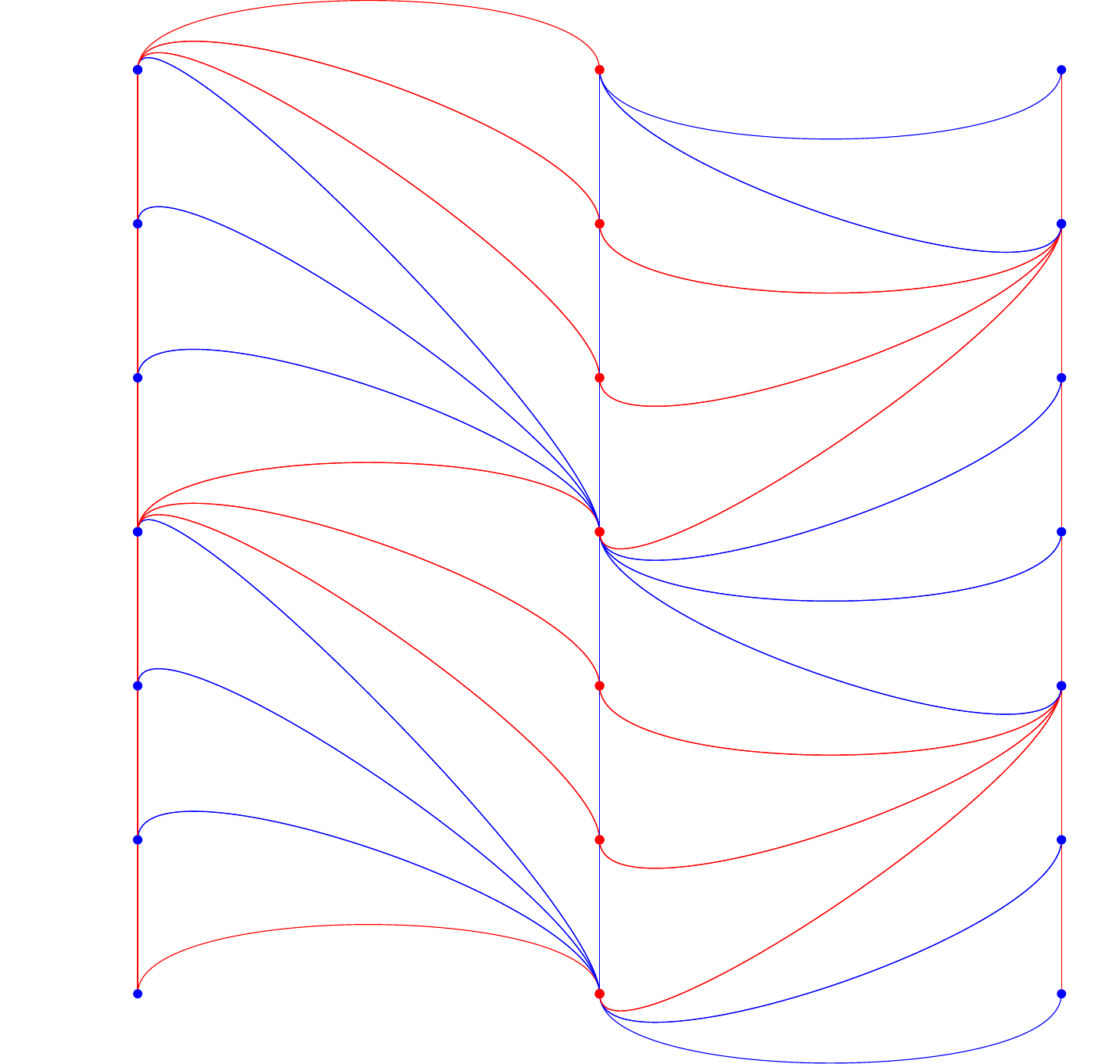
    }
    \subcaptionbox{\label{subfig:meridian}}{
        \def\svgwidth{2.75in}
        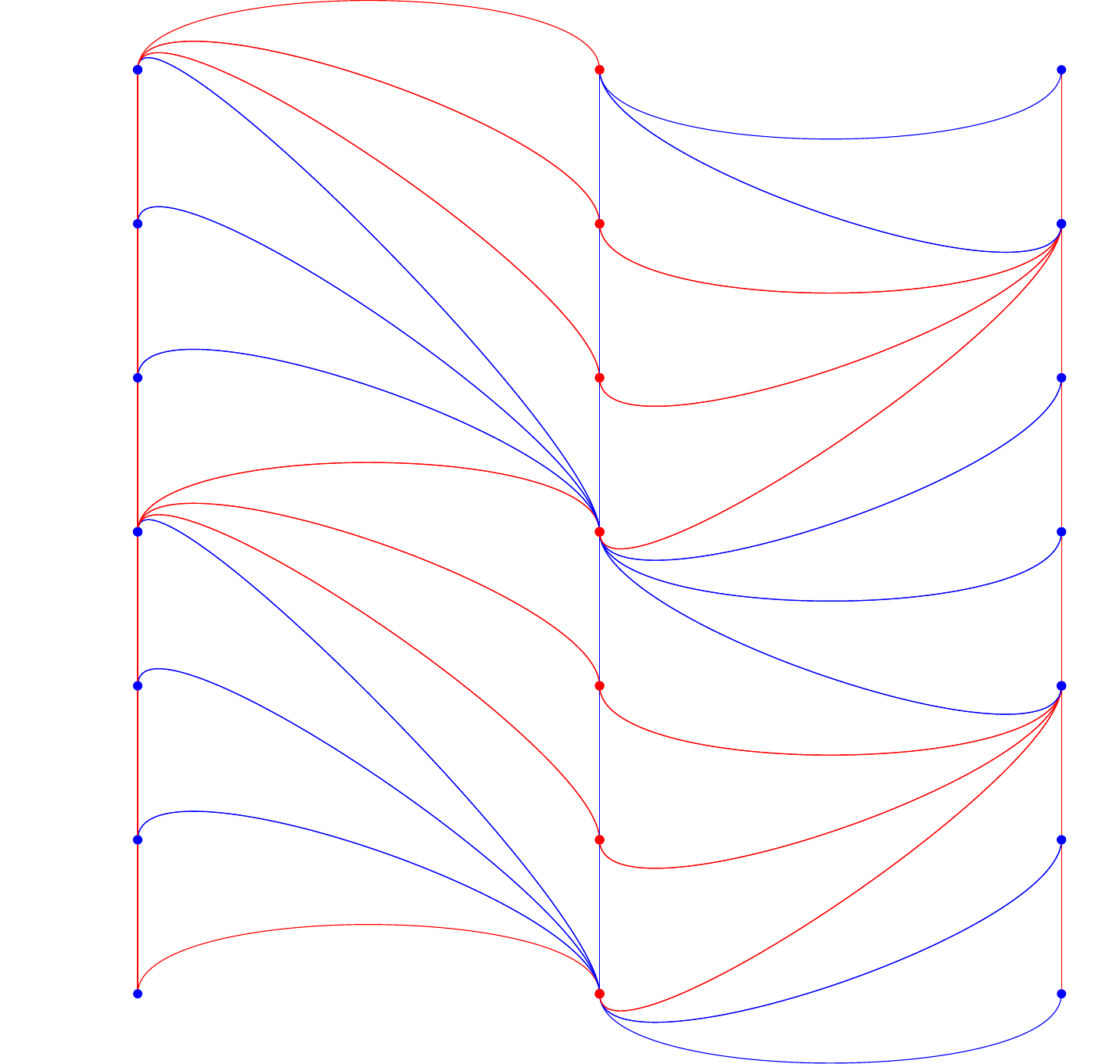
        }
    \caption{Shown in black are some curves that can be realized as integral curves of solutions to the holonomy problem. \ref{subfig:negative_slope} shows a curve of slope $\leq -1/3$, while \ref{subfig:longitude} shows the longitude which has slope $-1/6$. The integer weights in \ref{subfig:longitude} show the multiplicity with which the curve crosses that branch of the train track. The weights in \ref{subfig:negative_slope} show the height of the curve in the fan.}\label{fig:10_145_slopes}
\end{figure}

In contrast, we have:

\begin{lem}\label{lem:slopes2}
    There is no positive solution to the holonomy problem in \cref{fig:10_145}.
\end{lem}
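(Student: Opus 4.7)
The plan is to argue by contradiction. Suppose a positive solution to the holonomy problem exists; then by the discussion in \cref{subsec:holonomyproblem}, it produces a lamination on $\partial \Mbar$ carried by the boundary train track $L$, with positive boundary slope. So there must be a closed curve $\gamma$ carried by $L$ of positive slope, consistent with some choice of edge homeomorphisms $\{f_e\}$.

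The core idea is to exploit the ladder structure visible in \cref{fig:10_145}: since $\partial \Mbar$ has exactly one upward ladderpole and one downward ladderpole, per horizontal period $\gamma$ ascends on the upward ladderpole and descends on the downward ladderpole. Positive slope requires total ascent to strictly exceed total descent per period. This ascent--descent imbalance is what must ultimately conflict with the $f_e$ constraints.

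The bulk of the work is to trace $\gamma$ through a complete horizontal period and read off, for each edge $e$ of $\tau$ it visits, the heights at which $\gamma$ enters and exits the fans of $e$. Since the edges such as $A$, $B$, $C$ on the right ladderpole each appear with both orientations as $\gamma$ winds around, the induced heights across these multiple traversals must all match under the same $f_e$. This produces a closed chain of constraints on the $\{f_e\}$. Concretely, I expect the argument to proceed by a direct numerical check analogous to \cref{lem:slopes1} but running the inequalities in the opposite direction: one forces some $f_e(x_0)$ to satisfy both $f_e(x_0) > y_0$ and $f_e(x_0) < y_0$, which is impossible since $f_e$ is an orientation-preserving homeomorphism.

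The main obstacle is the bookkeeping around the large fans --- the proof of \cref{lem:slopes1} already requires a fan of size $5$ at the edge $A$ --- and identifying the specific cycle of edge traversals that yields the obstruction. In outline, I would first argue that positive slope forces $\gamma$ to ascend on the upward ladderpole using the steepest available combinatorial pattern of rungs, then propagate the resulting height constraints around the closed cycle of fans. The fact that the analogous positive-slope construction is blocked, while \cref{lem:slopes1} succeeds for $-1/3$ and $0$, should come from the asymmetry between the ``up'' and ``down'' smoothings at the edges $A$, $B$, $C$ on the right ladderpole.
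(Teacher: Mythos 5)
Your proposal takes a genuinely different route from the paper, but it is incomplete and would likely not succeed as stated. The paper's actual proof of \cref{lem:slopes2} is a three-line global argument: \cref{lem:slopes1} already produced a \emph{negative} solution; if a positive solution also existed, then \cref{thm:converse} (via \cref{prop:backward}) would produce a foliation transverse to the $1$-prong pseudo-Anosov flow on $S^3$, hence a taut foliation on $S^3$, contradicting Novikov. No combinatorics beyond \cref{lem:slopes1} is needed.

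Your approach --- trace a putative positive-slope curve through the ladders and derive incompatible constraints on the $f_e$ --- is the same style of argument the paper uses in \cref{lem:slopes3} and in the Proposition following \cref{thm:nofol2}. The trouble is that that argument only excludes slopes of large absolute value: the Proposition shows that, with one upward and one downward ladder, solutions to the holonomy problem have slope in $(-4,4)$, and it cannot distinguish, say, slope $+\tfrac{1}{10}$ from slope $-\tfrac{1}{10}$ by pure rung-counting. To rule out \emph{all} positive slopes, the obstruction must be finer than "the curve descends the ladderpole past two copies of the same edge.'' You gesture at an "asymmetry between the up and down smoothings at $A$, $B$, $C$'' that you have not identified, and you never carry out the bookkeeping; as it stands this is a plan, not a proof. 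There is also a secondary gap: you reduce to the existence of a \emph{closed} curve of positive slope carried by $L$, but the boundary of a lamination realizing a positive solution need not contain a closed leaf (the slope may be irrational), so the reduction requires an additional argument about asymptotic slopes of non-closed leaves or a limiting argument. I recommend you compare with the paper's proof, which sidesteps all of this by exploiting the global fact that the ambient manifold here is $S^3$.
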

\begin{proof}
    We already showed that there is a negative solution to the holonomy problem. If there were a positive solution to the holonomy problem, then we could apply \cref{thm:converse} to conclude that there is a foliation on $S^3$ transverse to a transitive 1-prong pseudo-Anosov flow. This would be a taut foliation on $S^3$. Contradiction.
\end{proof}

\begin{lem}\label{lem:slopes3}
    There is no solution to the holonomy problem with slope less than -1.
\end{lem}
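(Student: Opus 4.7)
The plan is to prove this via a combinatorial analysis of the train track $L$ shown in Figure~\ref{fig:10_145}. Since measured laminations can be approximated by rational ones (with closed-curve support), it suffices to show that every simple closed curve $\gamma$ carried by $L$ with homology class $(q,p)$ in the $(\mu,\lambda_d)$-basis (where $q>0$) satisfies $p\geq -q$.

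I would trace $\gamma$ through $L$ using the smoothing convention already established: curves traversed from left to right ascend on the upward ladderpole and descend on the downward one. In each of $\gamma$'s $q$ horizontal windings, $\gamma$ therefore ascends on the upward pole by some amount $A_i\in[0,H)$ and descends on the downward pole by some amount $D_i\in[0,H)$, where $H$ is the vertical period (the number of edges in one ladderpole). Between these two pole-traversals, $\gamma$ crosses exactly one rung in each of the two ladders, contributing vertical displacements $\Delta^A_i$ and $\Delta^B_i$. Summing over windings gives
\[ pH \;=\; \sum_{i=1}^q \bigl(A_i - D_i + \Delta^A_i + \Delta^B_i\bigr). \]

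The main technical step is to establish the inequality $\Delta^A_i+\Delta^B_i\geq 0$ for every valid pair of rungs (one in each ladder) traversable by a carried curve. I would verify this by direct inspection of Figure~\ref{fig:10_145}: reading off the heights of each rung's endpoints on the two ladderpoles and confirming that any pair of ladder-A and ladder-B rungs that can be patched together via the smoothing at the intermediate pole vertices has non-negative total vertical tilt. Granted this bound, each summand satisfies $A_i - D_i + \Delta^A_i + \Delta^B_i > -H$, so $pH > -qH$ and hence $p/q > -1$.

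The main obstacle is the verification of the rung bound $\Delta^A_i+\Delta^B_i\geq 0$ for all admissible rung pairs, which requires careful case analysis of the specific 10\_145 train track. An alternative, more systematic route is to phrase the minimization of boundary slope as a linear program over the polyhedral cone of non-negative weights on the branches of $L$ satisfying the switch conditions; the slope is a projective-linear function of these weights, and for 10\_145 the minimum can be computed directly to equal $-1$. This approach has the advantage of being algorithmic and robust, at the cost of replacing a conceptual bound with a finite computation.
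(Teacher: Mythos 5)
Your argument never uses the defining data of the holonomy problem --- the orientation-preserving homeomorphisms $f_e:[0,n_e]\to[0,m_e]$ at the edges of $\tau$ --- and works only with weights/curves carried by the boundary train track $L$. That cannot suffice: $L$ itself carries curves of arbitrarily negative slope, since a carried curve may descend the downward ladderpole past as many switches as it likes before exiting along a rung (continuing along a ladderpole is always smooth, as the ladderpoles are carried). The curve in \cref{fig:10_145_impossible} is exactly such a track-carried curve of slope $<-1$; it is excluded not by any switch condition on $L$ but by a three-dimensional constraint. Consequently your key inequality $\Delta^A_i+\Delta^B_i\geq 0$ together with the implicit bound $D_i<H$ on the descent per winding cannot both be established from the track, and the proposed linear program over non-negative branch weights of $L$ would return a minimum strictly below $-1$ (the weights of the curve in \cref{fig:10_145_impossible} satisfy the switch conditions), not $-1$.

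The obstruction the lemma actually rests on is the coupling between the two vertices of $L$ labelled by the same edge of $\tau$: both endpoints of an edge lie on ladderpoles of the same type, and a leaf descending the downward ladderpole through both $B^{-1}$ and $B$ would force the single homeomorphism $B:[0,2]\to[0,2]$ to send a point of $[1,2]$ into $[0,1]$ and a point of $[0,1]$ into $[1,2]$, which is impossible for an order-preserving map. This bounds the descent along the downward ladderpole in each winding and yields the slope bound; any correct proof must invoke this (or an equivalent) constraint from the $f_e$'s rather than carriage by $L$ alone. A secondary gap: your reduction to rational measured laminations is not justified here, since solutions to the holonomy problem are laminations specified by homeomorphisms, generally carry no invariant transverse measure, and a measured lamination carried by $L$ need not arise from any solution, so approximating in the measured-lamination cone does not control the slopes realized by solutions.
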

\begin{proof}
    Consider an integral curve passing through the rung labelled $p$. It travels down through the middle ladderpole for some time, and then exits along some rung on the right ladder. See \cref{fig:10_145_impossible}. Note that during its descent along the downward ladderpole, the integral curve cannot pass through both the edge marked $B^{-1}$ and the edge marked $B$. This is because the homeomorphism $B:[0,2]\to [0,2]$ would have to send a point in $[1,2]$ to a point in $[0,1]$ as well as a point in $[0,1]$ to $[1,2]$. Therefore, the lowest rung at which this curve can leave the middle ladderpole is the rung marked $q$. This curve cannot return to the left ladder with slope $<-1$.

    \begin{figure}[!h]
        \centering
        \def\svgwidth{2.75in}
        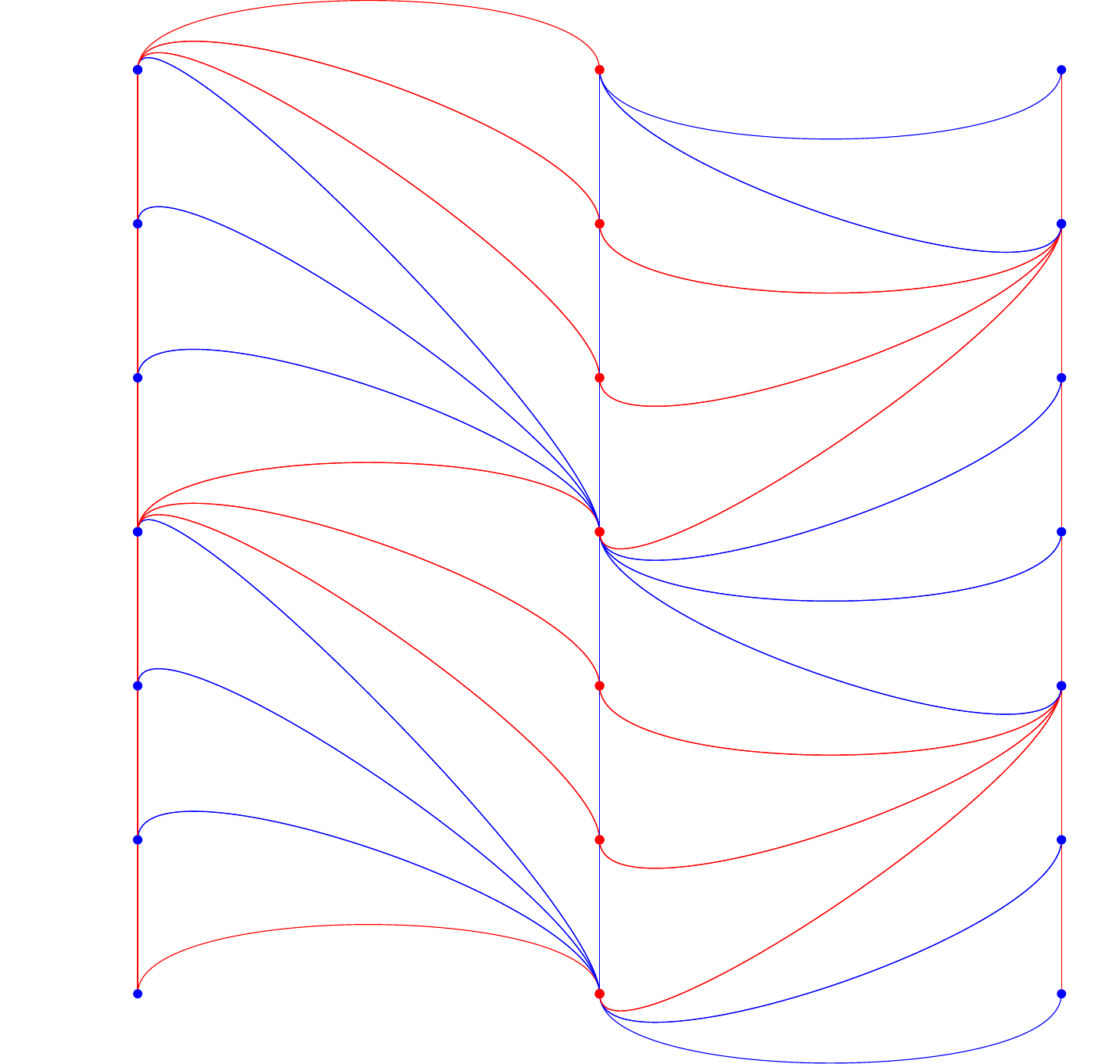
        \caption{A curve that is not realizable.}\label{fig:10_145_impossible}
    \end{figure}
\end{proof}

\begin{proof}[Proof of \cref{thm:nofol1}]
Combining \cref{lem:slopes1} with \cref{thm:converse}, we know that Dehn surgery along slopes in $(-1/3,0)$ results in a pseudo-Anosov flow with a transverse foliation. Combining \cref{lem:slopes2}, \cref{lem:slopes3}, and \cref{thm:converse}, we know that Dehn surgery along slopes in $(0,\infty)$ and $(-\infty,-1)$ result in a pseudo-Anosov flow without a transverse foliation.

Writing these slopes in terms of the longitude $\lambda = (6,-1)$ and meridian $\mu = (-1,0)$ gives the statement claimed in \cref{thm:nofol1}. We have $(3,-1) = \lambda + 3\mu$ and $(1,-1) = \lambda + 5\mu$. Note that the orientation of our pictures is opposite to the standard orientation $(\lambda, \mu)$; we thank Siddhi Krishna for pointing this out.
\end{proof}










Having completed this example, we can now generalize the argument to 1-cusped veering triangulations with only one upward and one downward ladder. 
\Cref{thm:nofol2} follows immediately from the following observation:

\begin{prop}
    Suppose $\tau$ is a 1-cusped veering triangulation whose boundary triangulation has only one upward ladder and one downward ladder. Then solutions to the holonomy problem have slope between $-4$ and $4$.
\end{prop}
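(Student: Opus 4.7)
The plan is to generalize the argument of \Cref{lem:slopes3} from the $10_{145}$ example to a uniform statement about any 2-ladder veering triangulation. The key ingredient in that proof was that a monotonic descent of an integral curve along the downward ladderpole is forced to be short, because if the descent encounters two occurrences of a single edge $e$ of $\tau$, then the homeomorphism $f_e$ would have to be orientation-reversing on some subinterval. I will convert this local obstruction into a slope bound.

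First, fix a candidate solution $\{f_e\}$ and pick an integral curve $\gamma$ of the resulting boundary lamination. Since $L$ has only one upward ladderpole $U$ and one downward ladderpole $D$, the curve $\gamma$ alternates between arcs along $U$ or $D$ and crossings of rungs. I will decompose $\gamma$ into maximal \emph{monotonic runs}: sub-arcs lying on a single ladderpole and moving consistently up or down between two consecutive rung crossings. Over one horizontal period (fundamental-domain translate in the meridional direction), $\gamma$ crosses finitely many rungs and traverses a finite sequence of monotonic runs. The slope of $\gamma$ is, up to a universal factor determined by the triangulation, the total signed vertical travel per period divided by the number of rung crossings per period. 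Bounding the slope therefore reduces to bounding the signed vertical travel contributed by each monotonic run.

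Next I formulate the fan-consistency obstruction in general. Suppose a monotonic descent along $D$ passes through ladderpole vertices $v_1,\dots,v_k$ (from top to bottom), each corresponding to an edge $e_i$ of $\tau$, and suppose two of these, $v_i$ and $v_j$ with $i<j$, are the two boundary vertices of a single edge $e$. The monotonicity of the descent, combined with which fan of $e$ each $v_i$ and $v_j$ sees in the boundary triangulation, forces $f_e : [0,n_e]\to[0,m_e]$ to send a specified subinterval to the ``wrong'' side at $v_i$ and another subinterval to the wrong side at $v_j$, which is incompatible with the homeomorphism being orientation-preserving. This is the direct generalization of the contradiction with $B:[0,2]\to[0,2]$ in \Cref{lem:slopes3}. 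Consequently, within a monotonic run on either ladderpole, no edge of $\tau$ may appear twice.

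The final step, which I expect to be the main obstacle, is the combinatorial step of converting ``no edge repeats in a monotonic run'' into a numerical bound of at most four ladderpole units. I will exploit the special structure of a 2-ladder veering triangulation, where both ladderpoles are unique and every tetrahedron contributes a prescribed arrangement of edges on the boundary triangulation. The point is to show that on each ladderpole, the cyclic sequence of edge-labels of consecutive vertices must repeat within at most five steps: the veering/coorientation rules for tetrahedra attached to a single ladderpole limit how many distinct edges can appear before the structure is forced to repeat. Given this, each monotonic run contributes at most four units of vertical travel, so the combined contribution of at most one ascending and one descending run per period is at most eight units against at least two rung crossings, yielding slope strictly between $-4$ and $4$. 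The arithmetic details at the end are routine once the combinatorial repetition lemma is established.
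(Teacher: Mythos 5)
Your identification of the local obstruction is right and matches the paper: once a monotonic descent along a ladderpole passes both boundary endpoints of a single edge $e$ of $\tau$, the homeomorphism $f_e$ would have to be orientation-reversing on some subinterval, which is impossible. The fact that the two endpoints of any edge lie on the \emph{same} ladderpole (both ascending or both descending) is exactly what makes this usable.

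The gap is your ``combinatorial repetition lemma,'' the claim that on a two-ladder boundary triangulation the cyclic sequence of edge labels along a ladderpole must repeat within at most five steps. You flag it as the main obstacle and offer only a heuristic (``the veering/coorientation rules \ldots{} limit how many distinct edges can appear''), but no such universal bound holds: a two-ladder veering triangulation can have arbitrarily many edges, the descending ladderpole can have arbitrarily many vertices per vertical period, and nothing forces a label to recur within a bounded number of steps. Already in the $10_{145}$ example the first recurrence (e.g.\ from $A^{-1}$ down to $A$) occurs after three steps, which is half the vertical period; in larger examples this distance grows with the triangulation. Consequently you cannot hope to bound the length of a monotonic run by a \emph{constant} number of vertex steps, and the arithmetic you run afterwards (``at most eight units against at least two rung crossings'') does not follow.

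The paper's argument avoids this entirely and is not run-length-based. It bounds the \emph{other} contributions to the vertical drift over one meridional period: the curve can descend by at most one unit while crossing each of the two ladders, and it cannot descend at all along the ascending ladderpole. Under the hypothesis of slope $\le -4$, the remaining descent ($\ge 2$ in the same units) must all happen on the descending ladderpole. But descending that far along a ladderpole on which each edge of $\tau$ contributes exactly two vertices per period forces the curve to cross both endpoints of some edge, triggering the obstruction you correctly identified. The essential point you are missing is that the bound comes from a \emph{budget} argument over a full period tied to the slope normalization (meridian $=0$, degeneracy slope $=\infty$), not from a uniform bound on monotonic runs. If you want to repair your proof, replace the repetition lemma with: (i) a bound of $1$ on the descent through each ladder crossing, (ii) the observation that the ascending ladderpole contributes no descent, and (iii) the pigeonhole fact that descending more than one vertical period's worth of vertices along a ladderpole must revisit an edge's pair of endpoints.
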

\begin{proof}
    Suppose we have an integral curve with slope $\leq -4$. The curve descends by at most 1 in the first ladder and at most 1 in the second ladder. Therefore, it must descend by at least 2 along the descending ladderpole. But then during this descent, it must cross two endpoints of the same edge of $\tau$, just as the curve in \cref{fig:10_145_impossible} crosses both $B$ and $B^{-1}$. As argued previously, there is no choice of homeomorphism at this edge allowing for this integral curve. Contradiction. A similar argument rules out slopes $\geq 4$.
\end{proof}

\begin{rmk}
	We found the solution in \cref{lem:slopes1} by hand, but is also fairly easy to find solutions with a computer search. Indeed, if there is a positive solution to the holonomy problem, then there is a piecewise linear positive solution. We implemented a search for piecewise linear solutions using simulated annealing. In practice were often able to find the maximal interval of slopes with transverse foliations for small examples of knot complements in the census of veering triangulations. We welcome suggestions of interesting examples to investigate.
\end{rmk}

\end{section}




\begin{section}{Questions}\label{sec:questions}
\begin{question}
    If $\phi$ admits a transverse contact structure, is there a positive solution to the holonomy problem? In other words, are all transverse contact structures carried in a suitable sense by $\skel$? 
\end{question}
\begin{question}
    A more conscientious mathematician would attempt to replace transversality with almost-transversality in the sense of Mosher. How does the author sleep at night?
\end{question}
\begin{question}\label{ques:uniquecontact}
 If $\phi$ admits a transverse contact structure, is it unique up to isotopy?
\end{question}
If this were true, then it would prove uniqueness of the Eliashberg--Thurston perturbation of any foliation admitting a transverse pseudo-Anosov flow. Vogel previously proved uniqueness for $C^2$ taut foliations on atoroidal manifolds. One way to approach \cref{ques:uniquecontact} would be to first show that the set of positive solutions to the holonomy problem is connected.
\end{section}

\printbibliography
\end{document}